\documentclass[3p,twocolumn]{elsarticle}
\usepackage{amsmath,amssymb,amsfonts}
\usepackage{mathtools}    
\newcommand*{\id}{{\mathrm{id}}}

\newcommand*{\R}{{\mathbb R}}                                                 

\newcommand*{\CC}{{\mathbb{C}}}

\newcommand*{\Vb}{{\mathbb V}}

\newcommand*{\pa}{{\partial}}

\newcommand*{\rd}{{\mathrm d}}

\newcommand*{\gf}{{\mathfrak g}}  
\newcommand*{\qf}{{\mathfrak q}} 
\newcommand*{\pf}{{\mathfrak p}} 
\newcommand*{\af}{{\mathfrak a}}
\newcommand*{\ef}{{\mathfrak e}}
\newcommand*{\ff}{{\mathfrak f}}
\newcommand*{\hf}{{\mathfrak h}}
\newcommand*{\of}{{\mathfrak o}}
\newcommand*{\lf}{{\mathfrak l}}

\newcommand*{\conv}{\ast} 

\newcommand*{\bs}{\boldsymbol}

\newtheorem{theorem}{Theorem}
\newtheorem{lemma}[theorem]{Lemma}
\newtheorem{corollary}[theorem]{Corollary}
\newdefinition{definition}{Definition}
\newdefinition{prescription}{Prescription}
\newdefinition{remark}{Remark}
\newdefinition{example}{Example}
\newproof{proof}{Proof}

\journal{a Mathematics Journal}

\begin{document}
\begin{frontmatter}
\title{Applications of Grassmannian flows to coagulation equations}
\author{Anastasia Doikou}    
\ead{A.Doikou@hw.ac.uk}
\author{Simon~J.A.~Malham\corref{cor1}\fnref{fn1}}    
\ead{S.J.A.Malham@hw.ac.uk} 
\author{Ioannis Stylianidis\fnref{fn2}}    
\ead{is11@hw.ac.uk}
\author{Anke Wiese}    
\ead{A.Wiese@hw.ac.uk}
\cortext[cor1]{Corresponding author, 28th March 2023}
\fntext[fn1]{SJAM was supported by an EPSRC Mathematical Sciences Small Grant EP/X018784/1}
\fntext[fn2]{IS was supported by an EPSRC DTA Scholarship}
\affiliation{organisation={Maxwell Institute for Mathematical Sciences,        
and School of Mathematical and Computer Sciences},   
addressline={Heriot-Watt University}, postcode={EH14 4AS}, city={Edinburgh}, country={UK}}

\begin{abstract}
We demonstrate how many classes of Smoluchowski-type coagulation models can be
realised as multiplicative Grassmannian flows and are therefore linearisable, and thus integrable in this sense.
First, we prove that a general Smoluchowski-type equation with a constant frequency kernel,
that encompasses a large class of such models, is realisable as a multiplicative Grassmannian flow.
Second, we establish that several other related constant kernel models
can also be realised as such. These include: the Gallay--Mielke coarsening model; the Derrida--Retaux
depinning transition model and a general mutliple merger coagulation model.
Third, we show how the additive and multiplicative frequency kernel cases can be
realised as rank-one analytic Grassmannian flows.  
\end{abstract}
\begin{keyword} Grassmannian flows \sep Smoluchowski coagulation \sep Fa\`a di Bruno algebra\end{keyword}
\end{frontmatter}

\section{Introduction}\label{sec:intro}
Our goal herein is to demonstrate how many classes of partial differential equations
with nonlocal coagulation interaction nonlinearities are integrable as Grassmannian flows.
We think of the Grassmann manifold as the set of all compatible graphs of linear maps.
First, we establish that a general Smoluchowski-type equation with constant frequency kernel is a multiplicative Grassmannian flow.
Hence it is linearisable and integrable in this sense. 
By this we mean that we can generate the solution by solving the corresponding linearised version of the 
general Smoluchowski-type equation, in fact the Laplace transform form of this linear equation,
and then solving a linear algebraic relation for the Laplace transform of the solution.
Second, we show that several related applications that do not quite fit into
the general Smoluchowski-type class just mentioned, are also realisable as multiplicative Grassmannian flows.
The applications include the Gallay--Mielke coarsening model, the Derrida--Retaux depinning transition model and a general mutliple merger coagulation model.
Third, we consider the classical additive and multiplicative frequency kernel cases that correspond to the
inviscid Burgers equation in Laplace transform space. Assuming analytic initial data, we show that the inviscid Burgers equation can be realised as
a rank-one analytic Grassmannian flow and is thus linearisable in this sense---up until the gelation time in the multiplicative case.
We achieve this by pulling back the exponential series for the analytic solution to the corresponding sequence space of Taylor coefficients.
We equivalence by the natural group of transformations based on the Fa\`a di Bruno algebra of compositions of functions (and Lagrange Inversion Theorem).
This is also a linear procedure, the problem boils down to solving a linear algebraic system of equations equivalent to a simple Fredholm equation
on the sequence space mentioned.

Let us give some brief context. The classical Smoluchowski coagulation equation has the form,
\begin{align}\label{eq:Smoluchowski}
  \pa_tg(x;t)=&\;\tfrac12\int_0^x K(x-y,y)g(x-y;t)g(y;t)\,\rd y\nonumber\\
  &\;-g(x;t)\int_0^\infty K(x,y)g(y;t)\,\rd y,
\end{align}
where $g=g(x;t)$ denotes the density of molecular clusters of mass $x$, and
the form of the frequency kernel $K=K(x,y)$ depends on the application at hand.
Three specific forms that are of particular interest are the constant, $K=1$, additive, $K=x+y$, and multiplicative, $K=xy$, kernel cases.
Smoluchowski's equation can be solved explicitly for these cases, though in the multiplicative kernel case only locally in time, up until the time of gelation.
In the constant kernel case, in Laplace transform space, the solution satisfies a scalar Riccati equation. 
In the additive and multiplicative kernel cases, the appropriate de-singularised Laplace transform of the solution satisfies an inviscid Burgers flow.
See Menon and Pego~\cite[Sec.~2]{MP} for more details. Classically, Smoluchowski's coagulation equation is a model for 
polymerisation, aerosols, clouds/smog, clustering of stars and galaxies as well as schooling and flocking;
see for example Aldous~\cite{Aldous}. Recently it has been used as a model for genealogy, see Lambert and Schertzer~\cite{LS};
coarsening, see Gallay and Mielke~\cite{GM}; nanostructures on substrates including ripening or `island coarsening', see Stoldt \textit{et al.} \cite{SJTCE},
growth and morphology of nanocrystals at atomic and nanoparticle scales, see Woehl \textit{et al.} \cite{WPEA} and Kaganer \textit{et al.\/} \cite{KPS},
epitaxial $\text{Si}_{1-\alpha}\text{Ge}_{\alpha}/\text{Si}(001)$ islands, see Budiman and Ruda~\cite{BR},
growth of graphene on Ir(111), see Coraux \textit{et al.\/} \cite{CNDEBWBMGPM}; and gold nanoparticles on a silicon substrate, see Winkler \textit{et al.} \cite{WWLGF};
blood clotting including  coagulation and formation of fibrin gel, see Guy \textit{et al.\/} \cite{GFK} and Rouleau formation where 
blood cells aggregate to form cylindrical clusters or `Rouleaux', see Samsel and Perelson~\cite{SPI,SPII} and polymer growth of proteins in biopharmaceuticals,
see Galina \textit{et al.\/} \cite{GLK} or Zidar \textit{et al.\/} \cite{ZKR}.

Our first main goal herein is to derive the solution to a general Smoluchowski-type equation with constant kernel $K=1$,
which has the form,
\begin{align}\label{eq:genSmoluchowski}
  \pa_tg(&x;t)\nonumber\\
  =&\;\int_0^xg(x-y;t)b(\pa_y)g(y;t)\,\rd y\nonumber\\
  &\;-g(x;t)\int_0^\infty g(y;t)\,\rd y+d(\pa_x)g(x;t) \nonumber\\
  &\;-\int_0^x\int_0^yg(z;t)b_0(y-z)\,\rd z\,g(x-y;t)\,\rd y\nonumber\\
  &\;-\int_0^xg(x-y;t)a(y;t)\,\rd y.
\end{align}
Here $d=d(\pa_x)$ and $b=b(\pa_x)$ are partial differential operators, $a=a(y;t)$ is a given function and $D_0$, $B_0$ and $\beta$ are constants.
Depending on the form of $d=d(\pa_x)$, the term `$d(\pa_x)g(x;t)$' generates either a diffusive or dispersive effect.
We think of this equation as a general version of the constant kernel Smoluchowski equation 
with a similar non-local nonlinearity and additional terms, including a diffusion term.
However, with the inclusion of the diffusion/dispersion term, we no longer strictly associate the
equation with a coagulation process and its solution $g=g(x;t)$ with the density of molecular clusters of mass $x$.
In particular for example, we do not expect there to be dissipation of the form `$d(\pa_x)g(x;t)$' of mass clusters 
in the original coagulation context. However, the other local and nonlocal terms shown could be interpreted as special fragmentation terms.
This is the general Smoluchowski-type equation we show can be linearised and constitutes a multiplicative Grassmannian flow.
By this we mean the following.
For convenience, we transport the equation to the Laplace transform setting with the solution form $\gf=\gf(s;t)$
representing the Laplace transform of $g=g(x;t)$. From the linearised version of~\eqref{eq:genSmoluchowski}
in the Laplace transform space, which essentially means we ignore the first, second and forth terms on the right in~\eqref{eq:genSmoluchowski},
we can generate a pair of linear evolution equations for the auxillary Laplace transform fields $\pf=\pf(s;t)$ and $\qf=\qf(s;t)$.
See Prescription~~\ref{def:lineareqns} for the explicit form these linear equations.
In Section~\ref{sec:Smoluchowski} we show that, for some $T>0$, the solution $\gf=\gf(s;t)$ is generated by solving
the linear multiplicative algebraic relation $\pf=\gf(1+\qf)$ at any time $t\in[0,T]$. Indeed, we can determine an explicit solution form. 
We also outline more explicitly therein why this represents a multiplicative Grassmannian flow.

Our second main goal herein is to show that five example models, that do not exactly fit the format
of the general Smoluchowski-type equation~\eqref{eq:genSmoluchowski}, can also be linearised and constitute multiplicative Grassmannian flows
(one we only partially solve). The models are disparate, but they do involve convolution-type nonlocal nonlinearities and thus we expected them
to be amenable to the linearisation approach we propose. The five models, which we present in Section~\ref{sec:examples}, are:
the coarsening model of Gallay and Mielke~\cite{GM}; the \emph{toy} depinning transition model of Derrida and Retaux~\cite{DerridaRetaux}---for
the transition of a DNA molecule into two single strands or a line from a substrate;
the geneology model of Lambert and Schertzer~\cite{LS}---this is the model we only partially solve;
the mutliple merger coagulation model of Iyer \textit{et al.\/} \cite{ILP} which involves higher degree nonlinearities;
and finally a pre-Laplace viscous Burgers model which can also be solved explicitly.

Our third main goal herein is to show that the Smoluchowski equation~\eqref{eq:Smoluchowski} in the 
additive and multiplicative kernel cases can also be represented as a Grassmannian flow.
For these cases, in the Laplace transform setting, the solution $\gf=\gf(s;t)$ satisfies the inviscid Burgers equation,
respectively, with and without linear damping; see Menon and Pego~\cite{MP}. These equations can be solved by characteristics.
Ultimately this involves inverting the charactistics flow map to determine their initial labels. For analytic data,
we can represent the characteristics flow as an exponential power series, at each time (up to gelation in the multiplicative case).
Inversion involves determining the coefficients of the inverse flow map from the flow map, which boils down to solving a (infinite) linear algebraic system of equations.
The Fa\`a di Bruno formula and Lagrange Inversion Theorem are key components in this procedure; see Figueroa \text{et al.\/} \cite{FG-BV} and Gessel~\cite{Gessel}.
The end result is that the inviscid Burgers equation underlying both cases, is linearisable as a rank-one analytic Grassmannian flow.

In summary, the new results herein are, we:
\begin{enumerate}
\item[(i)] Prove the solution flow of a general Smolu-chowski-type equation
with a constant frequency kernel, that encompasses a large class of models with nonlocal convolution nonlinearity,
is realisable as a multiplicative Grassmannian flow. The flow is thus linearisable and integrable in this sense;
\item[(ii)] Use the Grassmannian flow techniques from (i) to prove that five related constant kernel models, which include
the Gallay--Mielke coarsening, Derrida--Retaux depinning transition and general mutliple merger coagulation models,
are also integrable in an analogous sense;
\item[(iii)] Show that the invsicid Burgers equation, which corresponds to the classical additive and multiplicative frequency kernel
cases, is linearisable as a rank-one analytic Grassmannian flow.
\end{enumerate}

Our paper is structured as follows. In Section~\ref{sec:Smoluchowski} we show the general Smoluchowski-type equation~\eqref{eq:genSmoluchowski}
can be realised as a multiplicative Grassmannian flow. In Section~\ref{sec:examples} we demonstrate that 
the coarsening and multiple merger models mentioned are also multiplicative Grassmannian flows.
In Section~\ref{sec:FredholmGrassmannians}, we show that the additive and multiplicative kernel cases correspond to
rank-one analytic Grassmannian flows. Finally in Section~\ref{sec:discussion} we briefly discuss future directions.

\section{General Smoluchowski-type equation}\label{sec:Smoluchowski}
We extend the programme for Grassmannian flows we developed in Beck \textit{et al.\/} \cite{BDMS1,BDMS2}
to general Smoluchowski-type equations. Herein, we first explore the Smoluchowski equation~\eqref{eq:Smoluchowski} with constant frequency kernel $K=1$ in detail,
demonstrating that it is a multiplicative Grassmannian flow. 
We then focus on our main result and prove that the more general Smoluchowski-type equation~\eqref{eq:genSmoluchowski} is
also naturally a multiplicative Grassmannian flow.  
To begin, consider the Smoluchowski equation~\eqref{eq:Smoluchowski} in the constant frequency kernel $K=1$ case.
We note that the total mass $\int_{[0,\infty)}xg(x;t)\,\rd x$ of clusters is constant. This can be established by multiplying the
equation above by $x$, integrating over $x\in[0,\infty)$, swapping the order of integration in the first term and making a simple change of variables.
(This also applies to the additive kernel case, and to the multiplicative kernel case up to the time of gelation.)
Further, the evolution of the total number of clusters in Smoluchowski's coagulation equation,
$M(t)\coloneqq\int_0^\infty g(x;t)\,\rd x$, is given as follows. Integrating Smoluchowski's equation~\eqref{eq:Smoluchowski} over $x\in[0,\infty)$
and swapping the order of integration in the first term, reveals $\dot{M}=-\frac12M^2$ and so, $M(t)=2M_0/(2+tM_0)$,
where $M(0)=M_0$, with $M_0$ representing the initial total number of clusters.
For any function $g=g(x)$ such that $g\in L^1_{\mathrm{loc}}\bigl([0,\infty);\R\bigr)$, i.e.\/ it is locally integrable on $[0,\infty)$,
and which is of exponential order for large $x$, i.e.\/ $g(x)=\mathcal O(\mathrm{e}^{cx})$ as $x\to+\infty$ for some constant $c\geqslant0$,
its Laplace transform $\gf=\gf(s)$ is defined for $\mathrm{Re}(s)>c$ and given by
\begin{equation*}
\gf(s)\coloneqq\int_{0^-}^\infty \mathrm{e}^{-sx}\,g(x)\,\rd x.
\end{equation*}
Further, $\gf$ is analytic in $s\in\CC$ for $\mathrm{Re}(s)>c$.
We set $\CC_+\coloneqq\{\sigma\in\CC\colon\mathrm{Re}(\sigma)\geqslant0\}$, for convenience hereafter. 
The assumptions on $g$ above can be replaced by $g\in L^1\bigl([0,\infty);\R\bigr)$.
The inverse Laplace transform is given by the Bromwich contour integral,
\begin{equation*}
g(x)\coloneqq\int_{\gamma-\mathrm{i}\infty}^{\gamma+\mathrm{i}\infty}\mathrm{e}^{sx}\gf(s)\,\rd s,
\end{equation*}
where the constant $\gamma\geqslant0$ is chosen such that $\gamma>c$.
For both integrals, the limits shown are understood in the obvious sense.
With $M=M(t)$ determined, we observe $g=g(x;t)$ satisfies 
\begin{equation*}
\pa_tg(x;t)=\tfrac12\int_0^xg(y;t)g(x-y;t)\,\rd y-g(x;t)M(t).
\end{equation*}
The natural context for the Smoluchowski equation is Laplace transform space, see Menon and Pego~\cite{MP}.
If $g=g(x;t)$ satisfies Smoluchowski's equation with $K=1$,
then its Laplace transform $\gf=\gf(s;t)$ satisfies the Riccati equation,
\begin{equation}\label{eq:Riccati}
\pa_t\gf=\tfrac12\gf^2-M\gf.
\end{equation} 
The natural prescription for this Riccati equation is given by the following linear system. 
Assume we are given arbitrary data $\gf_0=\gf_0(s)$ and $\gf(s;0)=\gf_0(s)$.
\begin{prescription}[Smoluchowski $K=1$]\label{prescription:constantkernel}
Suppose the functions $\qf=\qf(s;t)$, $\pf=\pf(s;t)$ and $\gf=\gf(s;t)$
satisfy $\qf(s;0)=0$ and $\pf(s;0)=\gf_0(s)$ and the linear system of equations, 
\begin{equation*}
\pa_t\pf=-M\pf,\quad \pa_t\qf=-\tfrac12\pf,\quad \pf=\gf(1+\qf).
\end{equation*}
\end{prescription}
A straightforward calculation reveals $\gf$ satisfies the Riccati equation above
provided that $1+\qf\neq0$. Since $\qf(s;0)=0$, this is guaranteed for $t\in[0,T]$ for some $T>0$.
We show that we can take $T=\infty$, presently, in Example~\ref{example:explicitsolution}.
Naturally there is a corresponding linear prescription in mass cluster space.
\begin{example}\label{example:explicitsolution}
We use the linear system in Prescription~\ref{prescription:constantkernel}
to derive the explicit solution to the Smoluchowski equation in the constant kernel case
which, for example, can be found in Scott~\cite{Scott}. We observe that, by direct integration
of the formula for the total number of clusters $M=M(t)$ above: $\int_0^tM(\tau)\,\rd\tau=\log\bigl((2+tM_0)/2\bigr)^2$.
Solving the equation for $\pf=\pf(s;t)$ in Prescription~\ref{prescription:constantkernel}, we find
$\pf=\exp\bigl(-\int_0^tM(\tau)\,\rd\tau\bigr)\gf_0(s)=4\gf_0(s)/(2+tM_0)^2$,
where we have used that $\pf(s,0)=\gf_0(s)$. Integrating the equation for $\qf=\qf(s;t)$
therein and using that $\qf(s;0)=0$, we observe
\begin{equation*}
  1+\qf(s;t)=1-\frac{\gf_0(s)}{M_0}+\frac{2\gf_0(s)}{M_0(2+tM_0)}.
\end{equation*}
Since $\gf(s;t)=\pf(s;t)/\bigl(1+\qf(s;t)\bigr)$, we observe that in fact,
$\gf(s;t)=4\gf_0(s)/(2+tM_0)(2+tM_0-t\gf_0(s))$.
The solution $g=g(x;t)$ to the Smoluchowski equation in the constant kernel $K=1$ case, is thus given by
inverse Laplace transform integral of this expression for $\gf=\gf(s;t)$.
Once we have accounted for the normalization of $g=g(x;t)$ by $M_0$, this exactly matches the solution
to the Smoluchowski equation in the constant kernel $K=1$ case in Scott~\cite[eq.~(4.3)]{Scott}.
Further note that $1+\qf(s;t)=0$ if and only if $t=2/(\gf_0(s)-M_0)$.
From the definition of $\gf_0=\gf_0(s)$ as the Laplace transform of the initial data $g_0=g_0(x)$,
we observe that for all $s\in\CC_+$, $|\gf_0(s)|\leqslant\gf_0(0)\equiv M_0$.
In particular, as expected, by taking the large $s$ limit we observe that $1+\qf(s;t)$ is non-zero and there is
no blow-up in the solution $g=g(x;t)$ for $t\in(-2/M_0,\infty)$.
\end{example}
\begin{remark}[Measure-valued solutions]\label{rmk:Smoluchowskirigorous}
Menon and Pego~\cite{MP} establish the existence and uniqueness of weak solutions 
to Smoluchowski's equation in the $K=1$ case, in the sense of positive Radon measures on $(0,\infty)$.
We can invoke the results they establish here to affirm that$^\ddag$, as a simple addendum, the 
linear equations given in Prescription~\ref{prescription:constantkernel} determine such solutions.
Briefly, the \emph{desingularised} Laplace transform,
\begin{equation}\label{eq:desingLT} 
\gf(s;t)\coloneqq\int_0^\infty(1-\mathrm{e}^{-sx})\,\nu_t(\rd x), 
\end{equation}
of a time-dependent, positive measure-valued solution $\nu_t$ to the constant kernel Smoluchowski equation,
satisfies $\pa_t\gf=-\frac12\gf^2$. This is a renormalised version of the Riccati equation~\eqref{eq:Riccati}, 
with the total mass term involving $M=M(t)$ `knocked out', and a sign change on the right due to the definition of~\eqref{eq:desingLT}.
The solution to this Riccati equation for $\gf=\gf(s;t)$ is the solution given in Example~\ref{example:explicitsolution},
with $M_0$ set equal to zero and a sign change for $t$.
In other words, $\gf=2\gf_0(s)/(2-t\gf_0(s))$. Menon and Pego~\cite{MP} show that if $\nu_0$
is an initial positive radon measure, then this solution for $\gf=\gf(s;t)$ determines a weakly continuous map
$t\mapsto\nu_t$ into the set of positive Radon measures, for all times $t\in[0,\infty)$, with $\nu_t$ the
aforementioned weak solution of constant kernel Smoluchowski equation. Since Prescription~\ref{prescription:constantkernel},
with $M=M(t)$ set to zero and $\pa_t\qf=\frac12\pf$, determines $\gf$, and $\gf=\gf(s;t)$ determines the measure valued solutions just mentioned,
our statement `$\ddag$' just above follows.
\end{remark}

The linear equations for $\qf$ and $\pf$, and the linear multiplicative relation $\pf=\gf(1+\qf)$, constitute 
what we call a \emph{multiplicative} Grassmannian flow. Let us now outline what we mean by this.
Indeed, in Laplace transform space, suppose $Q=Q(t)$ and $P=P(t)$ are the time-dependent multiplicative linear operators given by,
\begin{align*}
  Q(t)&\colon\varphi(s)\mapsto\qf(s;t)\varphi(s),\\
  P(t)&\colon\varphi(s)\mapsto\pf(s;t)\varphi(s).
\end{align*}
Such muliplicative operators are bounded linear operators on a vector space $\Vb$ provided
$\qf=\qf(\,\cdot\,;t)$ and $\pf=\pf(\,\cdot\,;t)$ are bounded functions since,
for example, $\|\qf(t)\varphi\|_{\Vb}\leqslant\|\qf(t)\|_\infty\|\varphi\|_{\Vb}$. 
In particular, provided the essential infinum of $1+\qf(\,\cdot\,;t)$ is strictly positive,
then we know $(1+\qf(\,\cdot\,;t))^{-1}$, and thus $(\id+Q(t))^{-1}$, exist.
In Doikou \textit{et al.\/} \cite{DMSW-integrable} we outline the structure of Grassmannian flows.
We refer the reader there for more details.
The multiplicative operator setting we have here, simplifies the flow structure considerably.
Such multiplicative operators correspond to general diagonal operators.
Thus, for example, if we were to represent $Q=Q(t)$ by an integral kernel---suppose that $\Vb=L^2([0,\infty);\CC)$
and $Q$ is a Hilbert--Schmidt operator on $\Vb$---then the integral kernel would be $\qf(s;t)\delta(s-\sigma)$. 
Herein, we can, and do, proceed at the multiplicative function level---we find explicit solution forms for
$\qf=\qf(s;t)$ and $\pf=\pf(s;t)$ in our main examples in this section and Section~\ref{sec:examples}.
Thus, here, we have a linear (Stiefel) flow parameterised by $(1+\qf(s;t),\pf(s;t))^{\mathrm{T}}$.
Assuming $(1+\qf(s;t))^{-1}$ exists, then multiplication by $(1+\qf(s;t))^{-1}$
projects the flow $(1+\qf(s;t),\pf(s;t))^{\mathrm{T}}$ onto $(1,\gf(s;t))^{\mathrm{T}}$ where
$\gf(s;t)=\pf(s;t)(1+\qf(s;t))^{-1}$. The form $(1,\gf)^{\mathrm{T}}$ represents a graph of a given linear map $\gf$.
The flow $(1,\gf(s;t))^{\mathrm{T}}$ represents a flow in the coordinate patch corresponding to the top cell of the multiplicative Grassmannian. 
If $1+\qf(s;t)$ is zero, we choose a different coordinate chart. Indeed we multiply $(1+\qf(s;t),\pf(s;t))^{\mathrm{T}}$ by $(\pf(s;t))^{-1}$,
assuming $1+\qf(s;t)$ and $\pf(s;t)$ cannot be zero at the same time, which is true for all the examples we consider herein.
This projects the flow onto the chart $(\gf^\prime(s;t),1)^{\mathrm{T}}$ where $\gf^\prime(s;t)=(1+\qf(s;t))(\pf(s;t))^{-1}$.
We think of the Grassmannian as the set of all such compatible graphs of linear maps; again see Doikou \textit{et al.\/} \cite{DMSW-integrable}.
In our applications in this section and in Section~\ref{sec:examples}, we restrict ourselves to the top cell graph $(1,\gf(s;t))^{\mathrm{T}}$.
However, if $1+\qf(s;t)$ becomes zero so blow-up in $\gf=\gf(s;t)$ occurs, then in principle we can continue the solution
via the coordinate patch $(\gf^\prime(s;t),1)^{\mathrm{T}}$ (though we do not pursue this here).

Our main goal in this section is to derive the solution to the general Smoluchowski-type equation~\eqref{eq:genSmoluchowski}. 
We achieve this via a Grassmannian flow using a prescription for a linear system of equations, very similar to that in Prescription~\ref{prescription:constantkernel}. 
For some $n\in\mathbb N$, we define the operators $d=d(\pa_x)$ and $b=b(\pa_x)$ by, $d(\pa_x)\coloneqq-D_0-d_0\pa_x^{n}$ and $b(\pa_x)\coloneqq B_0+\beta\pa_x^m$, 
where $D_0>0$, $B_0\in(0,1)$ and $\beta\in\R$ are constants, and $m$ is a positive integer such that $m\leqslant n$.
Further we assume $d_0>0$, unless $n=2(2k-1)$ for $k\in\mathbb N$, in which case we assume $d_0<0$.  
The form of $d=d(\pa)$ assumed, generates either a diffusive or dispersive effect in the equation we now present. 
We define the spatial convolution product `$\conv$' of two functions $f=f(x;t)$
and $g=g(x;t)$ on $[0,\infty)$ by, 
\begin{equation*}
(f\conv g)(x;t)\coloneqq\int_0^xf(x-y;t)g(y;t)\,\rd y.
\end{equation*}
\begin{definition}[General equation]\label{def:gSmoluchowski}
The general Smoluchowski-type equation~\eqref{eq:genSmoluchowski} for $g=g(x;t)$, with $\pa=\pa_x$, has the following form:
\begin{align}
  \pa_tg=&\;B_0g\conv g+\beta g\conv (\pa^mg)-(M+D_0)g-d_0\pa^{n}g\nonumber\\
  &\;-g\conv b_0\conv g-g\conv a, \label{eq:origgenSmoltype}
\end{align} 
for $(x,t)\in[0,\infty)^2$, together with the initial condition $g(x;0)=g_0(x)$ for a given function $g_0(x)\geqslant0$ which
is strictly positive on some finite subinterval of $[0,\infty)$, and the boundary conditions $\pa_x^\ell g(0;t)=0$, for $\ell=0,1,2,\ldots,n-1$.
Note, all the boundary conditions are fixed at $x=0$. In the equation, $a$ is a given smooth, non-negative, integrable function on $[0,\infty)^2$
and $b_0$ is a smooth, non-negative, integrable function of $x\in[0,\infty)$. As before, $M=M(t)$ is the integral of $g=g(x,t)$ over $x\in[0,\infty)$.
\end{definition}

By integrating the Smoluchowski-type evolution equation above over $x\in[0,\infty)$,
and using that $\int_0^\infty(f\conv g)(x)\,\rd x=\bigl(\int_0^\infty f(x)\,\rd x\bigr)\bigl(\int_0^\infty g(y)\,\rd y\bigr)$ 
for any two functions $f$ and $g$ on $[0,\infty)$,
and the boundary conditions at $x=0$, as well as assuming $g$ decays as $x\to\infty$, we find, $\dot{M}=-(D_0+\overline{a})M+(B_0-1-\overline{b}_0)M^2$,
where $\overline{a}(t)\coloneqq\int_0^\infty a(x;t)\rd x$ and $\overline{b}_0\coloneqq\int_0^\infty b_0(x)\rd x$. 
We can solve this Riccati ordinary differential equation for $M=M(t)$ by the standard linearisation
approach. We suppose the functions $Q=Q(t)$, $P=P(t)$ and $M=M(t)$ satisfy $P(0)=M_0$ and $Q(0)=1$ and the system of linear equations:
$\dot Q=(1+\overline{b}_0-B_0)P$, $\dot P=-(D_0+\overline{a})P$ and $P=MQ$.
Solving the equation for $P=P(t)$ we find, $P(t)=\exp\bigl(-\int_0^t(D_0+\overline{a}(\tau))\,\rd\tau\bigr)M_0$.
Substituting this into the equation for $Q=Q(t)$, we find, $Q(t)=1+(1+\overline{b}_0-B_0)\int_0^tP(\tau)\,\rd\tau$.
Hence we observe that, $M(t)=(1+\overline{b}_0-B_0)^{-1}\pa_t\log\bigl(1+(1+\overline{b}_0-B_0)\int_0^tP(\tau)\,\rd\tau\bigr).$
Naturally we can substitute for $P=P(t)$, from the solution above, into this last formula to obtain an explicit solution for $M=M(t)$. 
Since $g_0(x)\geqslant0$, but strictly positive on some finite subinterval of $[0,\infty)$, we know $M_0>0$.
Since we assumed $a(x;t)\geqslant0$ and $b_0(x)\geqslant0$,
we know $\overline{a}(t)\geqslant0$ and $\overline{b}_0\geqslant0$. Thus since $D_0>0$ and $B_0\in(0,1)$,
we deduce that $D_0+\overline{a}(t)>0$ and $1+\overline{b}_0-B_0>0$. This means that $M=M(t)$ strictly
decreases from its initial value $M_0>0$ as time progresses. Furthermore $M=M(t)$ remains positive
throughout, though this may not be true for $g=g(x,t)$ itself. Thus, this a-priori estimate has
established that $M=M(t)$ is a bounded positive quantity on $[0,\infty)$.

Our goal now is to establish the solution to the general Smoluchowski-type equation given above.
To this end we define the class of Laplace Transform solutions. We assume all the properties
on $D_0$, $d_0$, $B_0$, $\beta$, $g_0$, $b_0$ and $a$ outlined above, hereafter.
\begin{definition}[Laplace transform solutions]\label{def:LTeqn}
We say $g=g(x;t)$ is a Laplace transform solution to the general Smoluchowski-type equation given above,
if the function $\gf=\gf(s;t)$ satisfies, 
\begin{equation*}
  \pa_t\gf=(B_0-\beta s^m-\mathfrak b_0)\gf^2-(M+\mathfrak a+D_0+d_0s^{n})\gf,
\end{equation*}
and the initial condition $\gf(s;0)=\gf_0(s)$. In these equations, $\gf_0=\gf_0(s)$, $\mathfrak b_0=\mathfrak b_0(s)$
and $\mathfrak a=\mathfrak a(s;t)$ are the respective Laplace transforms of $g_0(x)$, $b_0(x)$ and $a(x,t)$.
We assume they all exist, and are thus analytic, in $\CC_+$. Note that $M(t)\equiv\gf(0;t)$.
\end{definition}
\begin{remark}
If we take the limit $s\to0$ in the Laplace transform equation in Definition~\ref{def:LTeqn},
then we observe that, since $M(t)\equiv\gf(0;t)$, $\overline{a}(t)\equiv\mathfrak a(0;t)$ and $\overline{b}_0\equiv\mathfrak b_0(0)$,
the evolution equation for $\gf(0;t)$ is exactly the same as that for $M=M(t)$ above.
Hence for such Laplace transform solutions, we know that $M(t)\equiv\gf(0;t)$ satisfies
the properties we derived a-priori above, and in particular, has the solution formula shown.
\end{remark}
There is a simple prescription of linear equations for the equations above as follows.
\begin{prescription}[Gen.\/ Smoluchowski-type]\label{def:lineareqns}
Suppose the functions $\qf=\qf(s;t)$, $\pf=\pf(s;t)$ and $\gf=\gf(s;t)$
satisfy  $\qf(s;0)=0$ and $\pf(s;0)=\gf_0(s)$ and the system of linear equations:
\begin{align*}
\pa_t\pf&=-(M+\mathfrak a+D_0+d_0s^{n})\pf,\\
\pa_t\qf&=(\mathfrak b_0+\beta s^m-B_0)\pf,\\
\pf&=\gf(1+\qf).
\end{align*}
\end{prescription}
\begin{remark}
In the scalar commutative context here, there is some choice in how to distribute
the coefficients, of the linear terms in the evolution equation for $\gf=\gf(s;t)$ in Definition~\ref{def:LTeqn},
between the equations for $\qf=\qf(s;t)$ and $\pf=\pf(s;t)$ above. We have chosen to make the
linear evolution equation for $\qf=\qf(s;t)$ as simple as possible.
\end{remark}
We now proceed to explicitly solve the linear evolution equations in Prescription~\ref{def:lineareqns}
for $\pf=\pf(s;t)$ and then $\qf=\qf(s;t)$. The linear evolution equation for $\pf$, and then that for $\qf$,
are readily solved, and their unique forms are, 
\begin{align}
  \pf(s;t)&=\mathrm{e}^{-t(D_0+d_0s^{n})-\int_0^t(M(\tau)+\af(s;\tau))\,\rd\tau}\gf_0(s),\label{eq:pform}\\
  \qf(s;t)&=(\mathfrak b_0+\beta s^m-B_0)\int_0^t\pf(s;\tau)\,\rd\tau. \label{eq:qform}
\end{align}
Hence we find,
\begin{equation*}
\gf(s;t)=(\mathfrak b_0+\beta s^m-B_0)^{-1}\pa_t\log\bigl(1+\qf(s;t)\bigr).
\end{equation*}
We can substitute the solution above for $\pf=\pf(s;t)$ into this last expression to obtain
an explicit solution formula for $\gf=\gf(s;t)$.
Let us remark on the properties of this solution. Since $\gf_0=\gf_0(s)$ exists and is analytic in
$\CC_+$, from their forms above, we observe that both $\pf(s;t)$ and $1+\qf(s;t)$
exist for all $t\geqslant0$, are unique, and are analytic with respect to $s$ for $s\in\CC_+$.
Indeed both solutions are smooth with respect to time. Further, we also have the following.
\begin{lemma}[Reciprocal existence]\label{lemma:reciprocal}
For some $T>0$, we know for $t\in[0,T]$ we have: $|\qf(s;t)|<1$ and $1+\qf(s;t)\neq0$,
and $\exists$ a $\qf_\ast=\qf_\ast(s;t)$ such that,
\begin{equation*}
\bigl(1+\qf(s;t)\bigr)^{-1}=1+\qf_\ast(s;t),
\end{equation*}
with $\qf_\ast=\qf_\ast(s;t)$ analytic with respect to $s$ for $s\in\CC_+$
and smooth in time for $t\in[0,T]$. Naturally we have $\qf_\ast(s;0)=0$.
\end{lemma}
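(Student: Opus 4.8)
The plan is to work directly from the explicit formula for $\qf$ derived just above,
\begin{equation*}
\qf(s;t)=(\mathfrak b_0+\beta s^m-B_0)\int_0^t\pf(s;\tau)\,\rd\tau ,
\end{equation*}
together with the explicit formula for $\pf$ obtained above and the elementary observation that $\qf(s;0)=0$. We already know that $\pf$, and hence $\qf$, is analytic in $s$ for $\mathrm{Re}(s)\geqslant0$ and smooth in $t$; the real content of the lemma is a \emph{uniform-in-$s$} smallness estimate for $\qf$ over short times, after which the non-vanishing of $1+\qf$ and the existence and regularity of $\qf_\ast$ follow from a Neumann series argument.

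First I would collect the standard Laplace-transform bounds valid on $\mathrm{Re}(s)\geqslant0$: since $g_0$, $b_0$ and $a(\,\cdot\,;\tau)$ are non-negative, $|\gf_0(s)|\leqslant\gf_0(0)=M_0$, $|\mathfrak b_0(s)|\leqslant\overline{b}_0$ and $|\af(s;\tau)|\leqslant\overline{a}(\tau)$, while from the a-priori analysis above $M=M(t)$ is positive and bounded on $[0,\infty)$. The only term needing care is the polynomial factor $\beta s^m$ multiplying $\pf$: here I would use the boundary conditions $\pa_x^\ell g_0(0)=0$ for $\ell=0,\dots,n-1$ together with the smoothness of $g_0$, so that integration by parts in the Laplace integral shows that $s^m\gf_0(s)$ is the Laplace transform of $\pa_x^mg_0$ (all boundary terms vanish because $m\leqslant n$), whence $|s^m\gf_0(s)|\leqslant\|\pa_x^mg_0\|_{L^1}$. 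Combining these with the boundedness of $|\mathrm e^{-\tau(D_0+d_0s^{n})}|=\mathrm e^{-\tau D_0}\mathrm e^{-\tau d_0\mathrm{Re}(s^{n})}$ on the contour of interest---which is exactly what the dispersive/diffusive sign convention on $d_0$ relative to $n$ is designed to guarantee---one obtains a bound $|(\mathfrak b_0+\beta s^m-B_0)\,\pf(s;\tau)|\leqslant C$ uniformly in $s$ and in $\tau\in[0,T_0]$, and therefore $|\qf(s;t)|\leqslant Ct$.

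With that estimate in hand, choose $T\coloneqq\min\{T_0,(2C)^{-1}\}$, so that $|\qf(s;t)|\leqslant\tfrac12<1$ and hence $|1+\qf(s;t)|\geqslant\tfrac12>0$ for all $t\in[0,T]$; in particular $1+\qf(s;t)\neq0$. Setting $\qf_\ast\coloneqq(1+\qf)^{-1}-1=-\qf(1+\qf)^{-1}$, which coincides with the convergent Neumann series $\sum_{k\geqslant1}(-\qf)^k$, we see that $\qf_\ast$ is a quotient of functions analytic in $s$ with non-vanishing denominator, hence analytic in $s$ for $\mathrm{Re}(s)\geqslant0$, and smooth in $t$ on $[0,T]$ since $\qf$ is; and $\qf_\ast(s;0)=-\qf(s;0)\bigl(1+\qf(s;0)\bigr)^{-1}=0$ because $\qf(s;0)=0$.

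The main obstacle is the uniform estimate on $\qf$ in the middle step. Two points are delicate: taming the polynomial growth $|\beta s^m|$, which is resolved precisely by the hypothesis $m\leqslant n$ together with the boundary conditions on $g_0$ (through the identity expressing $s^m\gf_0(s)$ as the Laplace transform of $\pa_x^mg_0$); and controlling the exponential symbol factor $\mathrm e^{-\tau d_0s^{n}}$, which is the very reason the sign convention on $d_0$ was imposed. Once these are dealt with, everything else is routine.
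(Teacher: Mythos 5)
Your proposal follows the same skeleton as the paper's proof: start from the explicit product formula $\qf(s;t)=(\mathfrak b_0+\beta s^m-B_0)\int_0^t\pf(s;\tau)\,\rd\tau$, obtain smallness of $\qf$ for short times from $\qf(s;0)=0$, invert $1+\qf$ by a Neumann/power series, and read off analyticity in $s$, smoothness in $t$, and $\qf_\ast(s;0)=0$ for $\qf_\ast=(1+\qf)^{-1}-1$. The paper does exactly this, but entirely softly: it never attempts a quantitative estimate, relying only on analyticity of the two factors, smoothness in time, and continuity at $t=0$ to get $|\qf|<1$ for a short time. The extra content you add is a uniform-in-$s$ bound $|\qf(s;t)|\leqslant Ct$ over the whole closed half-plane, and that is precisely where your argument breaks.

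The problematic step is the claim that $|\mathrm{e}^{-\tau(D_0+d_0s^{n})}|=\mathrm{e}^{-\tau D_0}\mathrm{e}^{-\tau d_0\mathrm{Re}(s^{n})}$ is bounded on $\mathrm{Re}(s)\geqslant0$ because of the sign convention on $d_0$. Writing $s=r\mathrm{e}^{\mathrm{i}\theta}$ with $|\theta|\leqslant\pi/2$, one has $\mathrm{Re}(s^{n})=r^{n}\cos(n\theta)$, which changes sign on the half-plane as soon as $n\geqslant2$; hence no choice of the sign of $d_0$ keeps the exponential bounded there. Concretely, for $n=2$ the convention gives $d_0<0$, and along the positive real axis $|\pf(s;\tau)|\sim\mathrm{e}^{\tau|d_0|s^{2}}|\gf_0(s)|\to\infty$, since the bounded Laplace transform $\gf_0$ cannot compensate a factor $\mathrm{e}^{c\tau r^{n}}$ with $n\geqslant2$. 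So the constant $C$, and with it your choice $T=\min\{T_0,(2C)^{-1}\}$, does not exist uniformly over $\mathrm{Re}(s)\geqslant0$. What the sign convention does guarantee is boundedness of the exponential along the inversion contour, the imaginary axis (and along the real axis for $n\not\equiv2\pmod 4$), which is exactly how the paper later uses it in the Initial Value Theorem limits; restricted to the imaginary axis your estimates, including the integration-by-parts treatment of $\beta s^m\gf_0$ (legitimate under the assumptions the paper places on $g_0$ and its derivatives), do give a uniform bound. So either restrict your uniformity claim to the contour actually used in the inversion, or fall back on the paper's pointwise continuity-in-$t$ argument; as written, the middle estimate and hence the stated choice of $T$ fail for every $n\geqslant2$. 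The remaining steps (Neumann series, analyticity of the quotient with non-vanishing denominator, $\qf_\ast(s;0)=0$) are correct and coincide with the paper's.
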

\begin{proof}
The form of $\qf=\qf(s;t)$ above reveals that it is the product of 
$(\mathfrak b_0+\beta s^m-B_0)$ and the time integral of $\pf=\pf(s;t)$, both
of which are analytic with respect to $s$ for $s\in\CC_+$, with
the latter smooth in time for $t\in[0,T]$. Since $\qf(s;0)=0$, for a short time at least,
$|\qf(s;t)|<1$ and $1+\qf(s;t)\neq0$. Since $|\qf(s;t)|<1$ for that short time, $(1+\qf(s;t))^{-1}$
exists and has a convergent power series expansion in $\qf=\qf(s;t)$. Since $\qf=\qf(s;t)$ is
analytic with respect to $s$ for $s\in\CC_+$ and smooth in time,
$\qf_\ast(s;t)=(1+\qf(s;t))^{-1}-1$ is also analytic for all $s\in\CC_+$, and smooth in time.  
\qed
\end{proof}
Putting these results together, we have thus established the following. 
\begin{theorem}[Local existence]\label{thm:localexistence}
If the data $\gf_0=\gf_0(s)$ exists and is analytic in $\CC_+$,
then for some $T>0$, there exists a unique solution $\gf=\gf(s;t)$ to the nonlinear
equation in Definition~\ref{def:LTeqn} for $t\in[0,T]$. While it exists, this solution 
is analytic in $s$ for $s\in\CC_+$ and smooth in time.
\end{theorem}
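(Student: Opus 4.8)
The plan is to assemble the pieces already established in the text. First I would recall that we have the explicit closed-form solutions
\begin{equation*}
\pf(s;t)=\exp\biggl(-t(D_0+d_0s^{n})-\int_0^t\bigl(M(\tau)+\af(s;\tau)\bigr)\,\rd\tau\biggr)\gf_0(s),
\qquad
\qf(s;t)=(\mathfrak b_0+\beta s^m-B_0)\int_0^t\pf(s;\tau)\,\rd\tau,
\end{equation*}
from the Prescription in Definition~\ref{def:lineareqns}. Since $\gf_0$, $\af$ and $\mathfrak b_0$ exist and are analytic for $\mathrm{Re}(s)\geqslant0$, and $M=M(t)$ is a bounded positive function on $[0,\infty)$ by the a-priori estimate derived above, the integrand defining $\pf$ is analytic in $s$ and continuous (indeed smooth) in $t$; hence $\pf(s;t)$ is analytic in $s$ for $\mathrm{Re}(s)\geqslant0$ and smooth in $t$ for all $t\geqslant0$, and likewise for $\qf(s;t)$ after one time integration. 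These analyticity and smoothness claims are exactly those stated in the paragraph preceding Lemma~\ref{lemma:reciprocal}, so I would simply cite them.

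Next I would invoke Lemma~\ref{lemma:reciprocal}: there is a $T>0$ such that for $t\in[0,T]$ we have $1+\qf(s;t)\neq0$ and $(1+\qf(s;t))^{-1}=1+\qf_\ast(s;t)$ with $\qf_\ast$ analytic in $s$ for $\mathrm{Re}(s)\geqslant0$ and smooth in time. Then I would define
\begin{equation*}
\gf(s;t)\coloneqq\pf(s;t)\bigl(1+\qf_\ast(s;t)\bigr),
\end{equation*}
which is a product of functions analytic in $s$ for $\mathrm{Re}(s)\geqslant0$ and smooth in $t$ on $[0,T]$, hence has the same regularity. By construction $\gf$ satisfies the multiplicative relation $\pf=\gf(1+\qf)$, so $\bigl(\qf,\pf,\gf\bigr)$ solves the full linear system of Definition~\ref{def:lineareqns}; and the ``straightforward calculation'' remarked upon just after Definition~\ref{def:LTeqn} (differentiate $\pf=\gf(1+\qf)$ in $t$, substitute the evolution equations for $\pf$ and $\qf$, and divide by $1+\qf$, which is legitimate since $1+\qf\neq0$ on $[0,T]$) shows $\gf$ satisfies the Riccati-type equation of Definition~\ref{def:LTeqn}. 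The initial condition $\gf(s;0)=\pf(s;0)(1+\qf_\ast(s;0))=\gf_0(s)$ holds since $\qf_\ast(s;0)=0$. This gives existence of a solution with the asserted regularity.

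For uniqueness, I would argue directly at the level of the nonlinear equation in Definition~\ref{def:LTeqn}, treating $s$ as a fixed parameter with $\mathrm{Re}(s)\geqslant0$. For each such $s$ the equation $\pa_t\gf=(B_0-\beta s^m-\mathfrak b_0)\gf^2-(M+\mathfrak a+D_0+d_0 s^{n})\gf$ is a scalar (complex) Riccati ODE with coefficients continuous in $t$; its right-hand side is locally Lipschitz in $\gf$, so the Picard--Lindel\"of theorem gives a unique maximal solution through the datum $\gf_0(s)$. Hence any two Laplace transform solutions agree wherever both exist, and in particular on $[0,T]$. I do not expect a genuine obstacle here; the only mild point of care is that $T$ in Lemma~\ref{lemma:reciprocal} was obtained for "a short time at least'' and may in principle depend on $s$, so to get a uniform $T$ one uses the bound $|\gf_0(s)|\leqslant\gf_0(0)$ for $\mathrm{Re}(s)\geqslant0$ (already noted in Example~\ref{example:explicitsolution}) together with the explicit decay of $\pf$ in $t$ to see that $|\qf(s;t)|$ is controlled uniformly in $s$ on a fixed time interval; this makes the choice of $T$ in Lemma~\ref{lemma:reciprocal} uniform in $s$, which is what the theorem needs. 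That uniformity step is the one place where I would be slightly careful rather than merely citing earlier results.
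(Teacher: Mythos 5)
Your proposal is correct and follows essentially the same route as the paper: the explicit solution formulae for $\pf$ and $\qf$, their analyticity in $s$ and smoothness in $t$, and the Reciprocal Lemma~\ref{lemma:reciprocal} are combined to define $\gf=\pf(1+\qf_\ast)$ and verify it solves the equation of Definition~\ref{def:LTeqn}, which is precisely how the paper "puts these results together". Your two additions---the explicit Picard--Lindel\"of uniqueness argument with $s$ as a parameter, and the remark that $T$ should be taken uniform in $s$ via the bound $|\gf_0(s)|\leqslant\gf_0(0)$---are sensible refinements of details the paper leaves implicit, not a different method.
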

\begin{remark}
If we restrict the parameters in the general Smoluchowski-type equation, 
we can establish global in time solutions as in Example~\ref{example:explicitsolution}.
Suppose $B_0=\frac12$, $\beta=d_0=0$ and $a\equiv b_0\equiv0$.
Then, $\pf(s;t)=\exp\bigl(-tD_0-\int_0^t M(\tau)\,\rd\tau\bigr)\gf_0(s)$. 
Using the $M=M(t)$ form preceding Definition~\ref{def:LTeqn}, we see,
$\exp\bigl(-\int_0^t M(\tau)\,\rd\tau\bigr)=\bigl(1+(M_0/2D_0)(1-\mathrm{e}^{-tD_0})\bigr)^{-2}$.
Substituting this into \eqref{eq:pform} and \eqref{eq:qform}, by direct computation we find,
\begin{align*}
\pf(s;t)&=\mathrm{e}^{-tD_0}\gf_0(s)\biggl(1+\frac{M_0}{2D_0}(1-\mathrm{e}^{-tD_0})\biggr)^{-2},\\
\qf(s;t)&=\frac{\gf_0(s)}{M_0}\biggl(1+\frac{M_0}{2D_0}(1-\mathrm{e}^{-tD_0})\biggr)^{-1}-\frac{\gf_0(s)}{M_0}.
\end{align*}
Since $\gf=\pf/(1+\qf)$ we find, 
\begin{multline*}
\gf(s;t)=\mathrm{e}^{-tD_0}\gf_0(s)
\biggl(1+\frac{M_0}{2D_0}\bigl(1-\mathrm{e}^{-tD_0}\bigr)\biggr)^{-1}\\
\cdot\biggl(1+\frac{1}{2D_0}\bigl(M_0-\gf_0(s)\bigr)\bigl(1-\mathrm{e}^{-tD_0}\bigr)\biggr)^{-1}.
\end{multline*}
Since for all $s\in\CC_+$ we know $|\gf_0(s)|\leqslant|\gf_0(0)|\equiv M_0$, 
there is no blow-up for $\gf=\gf(s;t)$ for $t\in[0,\infty)$.
\end{remark}
With explicit knowledge of the Laplace transform solution $\gf=\gf(s;t)$ in hand, we now explore 
what we can say about the solution $g=g(x;t)$ to the original general Smoluchowski-type equation~\eqref{eq:origgenSmoltype}
given in Definition~\ref{def:gSmoluchowski}. We begin by considering the solution $\pf=\pf(s;t)$
to the first linear equation in Prescription~\ref{def:lineareqns}. 
Since $\gf_0=\gf_0(s)$ and $\mathfrak a(s;t)$ are analytic in $s$ for $s\in\CC_+$,
as is $s^n$ for $n\in\mathbb N$, the inverse Laplace transform $p=p(x;t)$ exists and is uniquely given for $x\in[0,\infty)$ by,
\begin{equation*}
  p(x;t)=\frac{1}{2\pi\mathrm{i}}\int_{-\mathrm{i}\cdot\infty}^{\mathrm{i}\cdot\infty}\mathrm{e}^{sx}\pf(s;t)\,\rd s,
\end{equation*}
where $\pf=\pf(s;t)$ is given by~\eqref{eq:pform}.
Using the Laplace transform Initial Value Theorem, see Theorem~\ref{thm:IVT} below, observe that 
the limit as $s\to\infty$ when $n\neq 2(2k-1)$, or the limit as $s\to\mathrm{i}\cdot\infty$ when $n=2(2k-1)$,
of `$s\,\pf(s;t)$' goes to zero for every $t\geqslant0$, and so $p(0^+;t)=0$ for all $t\geqslant0$.
Similar limits for $s^\ell\pf(s;t)$ tend to zero for all $\ell=2,3,\ldots,n$ and $t\geqslant0$,
so $\pa^\ell p(0^+;t)=0$ for all $\ell=1,2,\ldots,n-1$ and $t\geqslant0$. Hence $p=p(x;t)$
satisfies the boundary conditions.
Now observe, by differentiating the expression above for $p=p(x;t)$ with respect to time, 
\begin{equation*}
  \pa_tp=-D_0p-d_0\pa^np-Mp-a\conv p,
\end{equation*}
with $\pa=\pa_x$, $M=M(t)$ and $a=a(x;t)$. Hence the inverse Laplace transform of $\pf=\pf(s;t)$
satisfies the linearised form of the general Smoluchowski-type equation in Definition~\ref{def:gSmoluchowski}.
Now consider the inverse Laplace transform of $\qf=\qf(s;t)$ given by~\eqref{eq:qform}.
Note, since $\mathfrak b_0=\mathfrak b_0(s)$ is analytic in $s$ for $s\in\CC_+$, 
we know the inverse Laplace transform integral is well-defined from the properties we know for $\pf=\pf(s;t)$.
Since $m\leqslant n$, we see that,
\begin{equation*}
  q(x;t)=\int_0^t\bigl(b_0\conv p+\beta\pa_x^mp-B_0p\bigr)(x;\tau)\,\rd\tau. 
\end{equation*}
The inverse Laplace transforms $q=q(x;t)$ and $p=p(x;t)$ are thus well-defined, and satisfy the
linear equations we could have anticipated they would. Let us now focus on the linear relation
$\pf(s;t)=\gf(s;t)\bigl(1+\qf(s;t)\bigr)$. Given $q=q(x;t)$ and $p=p(x;t)$ as the unique
inverse Laplace transforms of $\qf=\qf(s;t)$ and $\pf=\pf(s;t)$, respectively, there is
thus a unique function $g=g(x;t)$ satisfying, $p(x;t)=g(x;t)+\int_0^xg(x-y;t)q(y;t)\,\rd y$,
for all $x\in[0,\infty)$ and $t\in[0,T]$, where the time $T>0$ is that in Theorem~\ref{thm:localexistence}.
This result follows from the results preceding the Local Existence Theorem~\ref{thm:localexistence},
including the Reciprocal Lemma~\ref{lemma:reciprocal}, which establish the Theorem.
We simply observe, $g=g(x;t)$ is the inverse transform of $\gf(s;t)=\pf(s;t)/\bigl(1+\qf(s;t)\bigr)$
which is well-defined, analytic in $s$ for $s\in\CC_+$ and smooth in time for $t\in[0,T]$ by
Lemma~\ref{lemma:reciprocal}. Further, from Lemma~\ref{lemma:reciprocal}, since for $t\in[0,T]$,
\begin{equation*}
\gf(s;t)=\pf(s;t)\bigl(1+\qf_*(s,t)\bigr),
\end{equation*}
with $\qf_\ast=\qf_\ast(s;t)$ analytic in $s$ for $s\in\CC_+$ 
and smooth in time for $t\in[0,T]$, the regularity of $g=g(x;t)$ is determined by the
regularity of $p=p(x;t)$ for $t\in[0,T]$. The regularity of $p=p(x;t)$ is determined
by its initial data $p_0\equiv g_0$. This follows, either from the form of the
exponential factor in the explicit expression for $\pf=\pf(s;t)$ given directly after Prescription~\ref{def:lineareqns},
or from the explicit partial differential equation $p=p(x;t)$ satisfies, shown just above.  
Indeed for the dissipative cases, the regularity of $p=p(x;t)$ for $t>0$ is strictly better than
that of the initial data. In any case, we assume $g_0$ and all its derivatives up
to and including that of order $n$ lie in $L^1\bigl(0,\infty;[0,\infty)\bigr)\cap C\bigl(0,\infty;[0,\infty)\bigr)$,
and that $g_0$ satisfies the zero boundary conditions at $x=0$, so that $\pa_x^\ell g_0(0)=0$, for all $\ell=0,1,\ldots,n-1$. 
The relation for $\gf$ in terms of $\pf$ and $\qf_\ast$ just above is equivalent to,
\begin{equation*}
  g(x;t)=p(x;t)+\int_0^xp(x-y;t)q_\ast(y;t)\,\rd y.
\end{equation*}
Taking the limit $x\to0$ we see that $g(0;t)=p(0;t)$ which equals zero.
Further, we can differentiate this linear convolution relation so that we have,
\begin{align*}
  \pa_xg(x;t)=&\;\pa_xp(x;t)+p(0;t)q_\ast(x;t)\\
  &\;+\int_0^x\pa_xp(x-y;t)q_\ast(y;t)\,\rd y.
\end{align*}
Taking the limit $x\to0$ we find that $\pa_xg(0;t)=\pa_xp(0;t)+p(0;t)q_\ast(x;t)$, which is also zero.
Taking further derivatives up to and including order `$n-1$' and taking the limit $x\to0$, reveals that
$\pa_x^\ell g(0;t)=\pa_x^\ell p(0;t)$ for all $\ell=0,1,2,\ldots,n-1$ and $t\in[0,T]$ and thus the
function $g=g(x;t)$, defined in this manner, satisfies all of the boundary conditions; as well as
the initial condition $g(x;0)=g_0(x)$. Together with the Local Existence Theorem~\ref{thm:localexistence},
the \emph{main result} of this section is the following.
\begin{corollary}[Explicit solution]
For some $T>0$, the solution $g=g(x;t)$ to the general Smolu-chowski-type equation given in Definition~\ref{def:gSmoluchowski}
is explicitly given by,
\begin{equation*}
  g(x;t)=\frac{1}{2\pi\mathrm{i}}\int_{-\mathrm{i}\cdot\infty}^{\mathrm{i}\cdot\infty}\mathrm{e}^{sx}\biggl(\frac{\pf(s;t)}{1+\qf(s;t)}\biggr)\,\rd s,
\end{equation*}
for $t\in[0,T]$, where $\pf=\pf(s;t)$ and $\qf=\qf(s;t)$ satisfy the linear equations given in Prescription~\ref{def:lineareqns} 
(with their explicit solution formulae stated directly afterwards). This solution satisfies the boundary conditions
stated in Definition~\ref{def:gSmoluchowski}. That it satisfies the general Smoluchowski-type equation 
in Definition~\ref{def:gSmoluchowski}, can be checked by direct substitution.
\end{corollary}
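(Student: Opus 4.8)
The plan is to collect the facts already proved in the run-up to the statement---the Local Existence Theorem~\ref{thm:localexistence}, the Reciprocal Lemma~\ref{lemma:reciprocal}, and the explicit formulae for $\pf=\pf(s;t)$ and $\qf=\qf(s;t)$ recorded just after Definition~\ref{def:lineareqns}---and then carry out the \emph{direct substitution} that verifies the proposed inverse-Laplace-transform formula solves the equation of Definition~\ref{def:gSmoluchowski}. I would take $T>0$ to be the time furnished by Lemma~\ref{lemma:reciprocal} (equivalently Theorem~\ref{thm:localexistence}): on $[0,T]$ one has $|\qf(s;t)|<1$, hence $(1+\qf(s;t))^{-1}=1+\qf_\ast(s;t)$ with $\qf_\ast$ analytic in $s$ for $\mathrm{Re}(s)\geqslant0$ and smooth in $t$. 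Since $\gf_0$, $\af$ and $\mathfrak b_0$ are analytic for $\mathrm{Re}(s)\geqslant0$, the explicit formulae make $\pf(s;t)$ and $1+\qf(s;t)$ analytic there too; the exponential factor in $\pf$ (with its sign structure fixed by the hypotheses on $d_0$) together with the decay of $(1+\qf(s;t))^{-1}$ supply the integrability along the vertical contour $\mathrm{Re}(s)=0$ needed for the Bromwich integral in the statement, and for its integrand multiplied by $s^j$ with $j\leqslant n$, to converge. This defines $g=g(x;t)$ for $(x,t)\in[0,\infty)\times[0,T]$ and yields $\gf(s;t)=\pf(s;t)\bigl(1+\qf_\ast(s;t)\bigr)$.

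Next I would pass to mass-cluster space. The inverse Laplace transforms $p=p(x;t)$ of $\pf$ and $q=q(x;t)$ of $\qf$ are well-defined, and, as established before the statement, $p$ solves the linearised equation $\pa_tp=-D_0p-d_0\pa^np-Mp-a\conv p$ together with $\pa_x^\ell p(0^+;t)=0$ for $\ell=0,1,\dots,n-1$ (this last via the Laplace transform Initial Value Theorem~\ref{thm:IVT}), while $q(x;t)=\int_0^t\bigl(b_0\conv p+\beta\pa^mp-B_0p\bigr)(\,\cdot\,;\tau)\,\rd\tau$. The relation $\pf=\gf(1+\qf)$ then reads $p=g+g\conv q$, and the equivalent relation $\gf=\pf(1+\qf_\ast)$ reads $g=p+p\conv q_\ast$. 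From the latter the boundary and initial conditions for $g$ follow exactly as in the text: letting $x\to0$ gives $g(0;t)=p(0;t)=0$, and differentiating $g=p+p\conv q_\ast$ up to order $n-1$ and letting $x\to0$ gives $\pa_x^\ell g(0;t)=\pa_x^\ell p(0;t)=0$ for $\ell=0,\dots,n-1$; and $\gf(s;0)=\gf_0(s)$ forces $g(x;0)=g_0(x)$. The regularity of $g$ on $[0,T]$ is governed by that of $p$, hence by that of $g_0$ (strictly improving in the dissipative cases), so the convolutions and derivatives appearing below are classical.

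The remaining point---``checked by direct substitution''---is to invert, term by term in $x$, the scalar ODE $\pa_t\gf=(B_0-\beta s^m-\mathfrak b_0)\gf^2-(M+\af+D_0+d_0s^{n})\gf$ of Definition~\ref{def:LTeqn}. One uses: the convolution theorem, so $\gf^2\mapsto g\conv g$, $\mathfrak b_0\gf^2\mapsto b_0\conv g\conv g$ and $\af\gf\mapsto a\conv g$; the differentiation rule $\mathcal L[\pa_x^jg]=s^j\gf$, valid precisely because $\pa_x^\ell g(0;t)=0$ for $\ell\leqslant n-1$, which sends the $d_0s^n\gf$ term to $d_0\pa^ng$ and the $s^m\gf^2$ term to $\pa_x^m(g\conv g)=g\conv(\pa^mg)$ (the boundary terms $\pa_x^j(g\conv g)(0)$ for $j\leqslant m-1$ vanishing for the same reason, since $m\leqslant n$); and $M(t)\equiv\gf(0;t)=\int_0^\infty g(x;t)\,\rd x$, which reproduces the total-cluster term. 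Reassembling the inverted terms recovers precisely the convolution form of the general Smoluchowski-type equation of Definition~\ref{def:gSmoluchowski}, while differentiation of the Bromwich formula in $x$ and $t$---legitimate by the analyticity and decay recorded in the first paragraph---confirms $g=g(x;t)$ is a genuine solution (classical for $t\in(0,T]$ in the dissipative cases).

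The main obstacle I anticipate is not algebraic but the analytic bookkeeping around the Bromwich representation: justifying that $\pa_x$ and $\pa_t$ may be taken under the contour integral, that $s^j\gf(s;t)$ is integrable along $\mathrm{Re}(s)=0$ for each $j\leqslant n$ so that $\pa_x^jg$ is represented by the naively differentiated integral, and that the contour $\mathrm{Re}(s)=0$ is admissible in the dispersive parameter range. These reduce to the explicit exponential and decay structure of $\pf$ and of $(1+\qf)^{-1}=1+\qf_\ast$ already in hand, together with the standing hypotheses that $g_0$ and its derivatives up to order $n$ lie in $L^1\bigl(0,\infty;[0,\infty)\bigr)\cap C\bigl(0,\infty;[0,\infty)\bigr)$ and vanish to order $n-1$ at $x=0$---but they ought to be spelled out carefully rather than waved through.
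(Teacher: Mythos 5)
Your proposal is correct and follows essentially the same route as the paper: it assembles the explicit formulae for $\pf$ and $\qf$, invokes the Reciprocal Lemma~\ref{lemma:reciprocal} and the Local Existence Theorem~\ref{thm:localexistence} to define $\gf=\pf(1+\qf_\ast)$, passes to the inverse Laplace transforms $p$, $q$ and $q_\ast$, and derives the boundary and initial conditions from $g=p+p\conv q_\ast$ together with the Initial Value Theorem~\ref{thm:IVT}, exactly as in the discussion preceding the corollary. If anything you go further than the paper, which leaves the final verification as ``direct substitution'': your term-by-term inversion of the Laplace-space Riccati equation (using the vanishing boundary values to justify $\mathcal{L}[\pa_x^jg]=s^j\gf$ and $\pa_x^m(g\conv g)=g\conv(\pa^m g)$) and your flagging of the differentiation-under-the-Bromwich-integral issues make explicit what the paper only asserts.
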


We record the following, which follows from Lemma~\ref{lemma:reciprocal}, as it is useful to us later.
\begin{lemma}[Inverse kernel]\label{lemma:IKL}
Let $q=q(x;t)$ denote the inverse Laplace transform of $\qf=\qf(s;t)$,
and suppose that $\qf(s;0)=0$ and that $\qf=\qf(s;t)$ is analytic in $s$ for $s\in\CC_+$ and smooth in time.
Then for some $T>0$, the function $\mathfrak r(s;t)\coloneqq(1+\qf(s;t))^{-1}$ exists for $t\in[0,T]$ and
is analytic in $s$ for $s\in\CC_+$ and smooth in time. 
The inverse Laplace transform $r=r(x;t)$ of $\mathfrak r=\mathfrak r(s;t)$ exists for $t\in[0,T]$,
and for any $f\in C(0,T;L^1([0,\infty);\R))$ we have,
\begin{equation*}
\int_0^x\biggl(f(y;t)+\int_0^yf(z;t)q(y-z;t)\,\rd z\biggr)\,r(x-y;t)\,\rd y
\end{equation*}
equals $f(x;t)$.
\end{lemma}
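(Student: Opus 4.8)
The plan is to build $r=r(x;t)$ as the inverse Laplace transform of $\mathfrak r(s;t):=(1+\qf(s;t))^{-1}$ and then read off the convolution identity by transforming back and forth between mass-cluster space and Laplace transform space. First I would invoke the Reciprocal Lemma~\ref{lemma:reciprocal}: under the stated hypotheses (namely $\qf(s;0)=0$, and $\qf$ analytic in $s$ for $\mathrm{Re}(s)\geqslant0$, smooth in time), there is a $T>0$ so that for $t\in[0,T]$ we have $|\qf(s;t)|<1$, hence $1+\qf(s;t)\neq0$, and $\mathfrak r(s;t)=(1+\qf(s;t))^{-1}=1+\qf_\ast(s;t)$ with $\qf_\ast$ analytic in $s$ for $\mathrm{Re}(s)\geqslant0$ and smooth in time, with $\qf_\ast(s;0)=0$. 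Analyticity of $\mathfrak r$ in the right half-plane, together with appropriate decay as $|s|\to\infty$ along vertical lines coming from $\qf_\ast\to0$, guarantees that the Bromwich integral defining $r=r(x;t)$ converges and yields a well-defined function for $t\in[0,T]$; I would note $r(x;t)=\delta(x)+r_\ast(x;t)$ informally, where $r_\ast$ is the honest inverse transform of $\qf_\ast$, or more carefully work with the decomposition $\mathfrak r = 1+\qf_\ast$ throughout so that no distributions are needed.

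Next I would identify the convolution structure. The key algebraic fact is that the Laplace transform turns the spatial convolution $\conv$ (as defined in the excerpt) into pointwise multiplication: for $f,g$ on $[0,\infty)$ with Laplace transforms $\mathfrak f,\mathfrak g$, we have $\widehat{f\conv g}=\mathfrak f\,\mathfrak g$. Thus the expression inside the outer integral in the claim, namely $f(y;t)+\int_0^y f(z;t)q(y-z;t)\,\rd z = \bigl(f+f\conv q\bigr)(y;t)$, has Laplace transform $\mathfrak f(s;t)\bigl(1+\qf(s;t)\bigr)$. Convolving once more against $r(\,\cdot\,;t)$ and taking the Laplace transform gives $\mathfrak f(s;t)\bigl(1+\qf(s;t)\bigr)\cdot\mathfrak r(s;t)=\mathfrak f(s;t)\bigl(1+\qf(s;t)\bigr)\bigl(1+\qf(s;t)\bigr)^{-1}=\mathfrak f(s;t)$. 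Since the whole left-hand side, as a function of $x$, has Laplace transform equal to $\mathfrak f(s;t)$, and since inverse Laplace transforms are unique on the relevant function class (here I would use that $f\in C(0,T;L^1([0,\infty);\R))$ so all the transforms exist and the inversion is valid), the left-hand side equals $f(x;t)$, which is exactly the claimed identity.

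The main obstacle is making the manipulation with $r$ rigorous without leaning on the Dirac delta, since $\mathfrak r=1+\qf_\ast$ does not decay as $|s|\to\infty$ and so its literal inverse transform is a distribution. My preferred route to sidestep this is to substitute $r = \delta + r_\ast$ (equivalently $\mathfrak r = 1 + \qf_\ast$) at the outset: the left-hand side becomes $(f+f\conv q) + (f+f\conv q)\conv r_\ast$, where $r_\ast$ is a genuine $L^1_{\mathrm{loc}}$ function (the inverse transform of the analytic, decaying $\qf_\ast$), so every convolution in sight is a classical one between integrable functions and Fubini applies freely. Then the identity to prove is $(f+f\conv q)\conv r_\ast = -f\conv q$, i.e.\ $f\conv(q + r_\ast + q\conv r_\ast)=0$; transforming, this is $\mathfrak f\,\bigl(\qf + \qf_\ast + \qf\,\qf_\ast\bigr)=\mathfrak f\cdot\bigl((1+\qf)(1+\qf_\ast)-1\bigr)=\mathfrak f\cdot 0=0$, which holds by the Reciprocal Lemma. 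A secondary technical point is uniqueness of the inverse Laplace transform and the justification for reading an equality of functions off an equality of transforms; this is standard (Lerch's theorem) on the continuity/integrability class assumed, and I would cite it rather than reprove it. Finally I would remark that the constant $T$ is the same as the one in Lemma~\ref{lemma:reciprocal}, inherited directly through $\mathfrak r$.
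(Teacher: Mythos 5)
Your proposal is correct and follows essentially the same route as the paper: invoke the Reciprocal Lemma~\ref{lemma:reciprocal} for the existence and regularity of $\mathfrak r=(1+\qf)^{-1}$, observe that $\mathfrak f(s;t)\bigl(1+\qf(s;t)\bigr)\mathfrak r(s;t)=\mathfrak f(s;t)$, and take the inverse Laplace transform, using that spatial convolution corresponds to products of transforms. Your extra care with the decomposition $\mathfrak r=1+\qf_\ast$ (so that $r=\delta+r_\ast$ never has to be interpreted distributionally) and the appeal to uniqueness of the inverse transform simply make explicit steps the paper's very brief proof leaves implicit.
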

\begin{proof}
The statements preceding the final one, follow from Lemma~\ref{lemma:reciprocal}.
Taking the inverse Laplace transform of $\mathfrak f(s;t)\bigl(1+\qf(s;t)\bigr)\mathfrak r(s;t)=\mathfrak f(s;t)$,
which holds for the Laplace transform $\mathfrak f=\mathfrak f(s;t)$ of $f=f(x;t)$, generates the result. \qed
\end{proof}
In our arguments above we used the following generalisation of the Laplace transform Initial Value Theorem,
which relies on the generalised Riemann--Lebesgue Lemma. 
We give a proof in~\ref{app:IVTproof}.
\begin{theorem}[Initial Value Theorem]\label{thm:IVT}
Suppose $f,f'\in L^1_{\mathrm{loc}}\bigr([0,\infty);\R\bigr)$ are both of exponential order.
If $\mathfrak f=\mathfrak f(s)$ is the Laplace transform of $f$, then $f(0^+)=\lim_{s\to\infty}s\mathfrak f(s)$,
where the limit can be taken along any ray in the half plane $\mathrm{Re}(s)\geqslant0$, not including
the positive or negative imaginary $s$-axes. If we assume $f,f'\in L^1\bigr([0,\infty);\R\bigr)$,
then we can also take the limit $s\to\pm\infty\cdot\mathrm{i}$, i.e.\/ along the rays coinciding with the
positive or negative imaginary $s$-axes.
More generally suppose $f\colon[0,\infty)\to\R$, and all of its derivatives up to and including order $n\in\mathbb N$,
are locally integrable on $[0,\infty)$ and of exponential order. Then for $\ell\in\{0,1,\ldots,n-1\}$,
the $\ell$th order initial derivatives $f^{(\ell)}(0^+)$, are given by the limit as $s\to\infty$ of,
\begin{equation*}
    s^{\ell+1}\mathfrak f(s)-s^\ell f(0^-)-s^{\ell-1} f'(0^-)-\cdots-sf^{(\ell-1)}(0^-),
\end{equation*}
where the limit can be taken along any ray in the half plane as above, not including the imaginary axes.
If $f^{(\ell)}\in L^1\bigr([0,\infty);\R\bigr)$ for all $\ell\in\{0,1,\ldots,n\}$, then we can also take the 
limit along the positive or negative imaginary $s$-axes.
\end{theorem}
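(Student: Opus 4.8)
The plan is to reduce the statement to two standard ingredients---the Laplace-transform differentiation formula and a generalised Riemann--Lebesgue estimate---and then to bootstrap from the first-order case to the higher-order formula by iteration.

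First I would establish $f(0^+)=\lim_{s\to\infty}s\mathfrak f(s)$. Since $f'\in L^1_{\mathrm{loc}}([0,\infty);\R)$, the function $f$ is locally absolutely continuous, so $f(x)=f(0^+)+\int_0^x f'(y)\,\rd y$ for $x>0$. Substituting this into $\mathfrak f(s)=\int_0^\infty\mathrm{e}^{-sx}f(x)\,\rd x$, which converges absolutely for $\mathrm{Re}(s)>c$ by the exponential-order hypothesis, and applying Fubini to the resulting double integral (absolutely convergent for $\mathrm{Re}(s)>c$ given the bounds on $f$ and $f'$), one obtains the identity $s\mathfrak f(s)=f(0^+)+\int_0^\infty\mathrm{e}^{-sx}f'(x)\,\rd x$ for $\mathrm{Re}(s)>c$. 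Equivalently this is integration by parts, the boundary term at $\infty$ vanishing for $\mathrm{Re}(s)$ large and the boundary term at $0$ contributing $f(0^+)$, which agrees with $f(0^-)$ since $f$ is continuous at $0$ under the hypotheses.

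Next comes the key estimate. Along a ray $s=\rho\,\mathrm{e}^{\mathrm{i}\theta}$ with $|\theta|<\pi/2$ fixed we have $\mathrm{Re}(s)=\rho\cos\theta\to\infty$ as $\rho\to\infty$, and $\bigl|\int_0^\infty\mathrm{e}^{-sx}f'(x)\,\rd x\bigr|\leqslant\int_0^\infty\mathrm{e}^{-\mathrm{Re}(s)x}|f'(x)|\,\rd x$. For $\mathrm{Re}(s)\geqslant c+1$ the integrand on the right is dominated by $C\mathrm{e}^{-x}\in L^1([0,\infty);\R)$, using $|f'(x)|\leqslant C\mathrm{e}^{cx}$, and tends to $0$ pointwise on $(0,\infty)$ as $\mathrm{Re}(s)\to\infty$; dominated convergence then forces $\int_0^\infty\mathrm{e}^{-sx}f'(x)\,\rd x\to0$ along the ray, and hence $f(0^+)=\lim_{s\to\infty}s\mathfrak f(s)$ there. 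For the limit along the positive or negative imaginary axis, where $\mathrm{Re}(s)=0$ and this argument is unavailable, one invokes the stronger hypothesis $f,f'\in L^1([0,\infty);\R)$: then $\mathfrak f$ and $\int_0^\infty\mathrm{e}^{-sx}f'(x)\,\rd x$ extend continuously to $\mathrm{Re}(s)\geqslant0$, the identity of the previous paragraph persists on the imaginary axis by continuity, and the classical Riemann--Lebesgue lemma gives $\int_0^\infty\mathrm{e}^{-\mathrm{i}\omega x}f'(x)\,\rd x\to0$ as $\omega\to\pm\infty$.

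For the general $\ell$th-order formula I would iterate. First record the standard relation $\int_0^\infty\mathrm{e}^{-sx}f^{(\ell)}(x)\,\rd x=s^\ell\mathfrak f(s)-s^{\ell-1}f(0^-)-s^{\ell-2}f'(0^-)-\cdots-f^{(\ell-1)}(0^-)$, proved by induction on $\ell$ via the same integration by parts; all boundary values at $0$ here are unambiguous because $f,f',\ldots,f^{(\ell-1)}$ are continuous at $0$ (each being locally absolutely continuous, its successor lying in $L^1_{\mathrm{loc}}$). Applying the first-order result of the two previous paragraphs to $g:=f^{(\ell)}$, which inherits the hypotheses since $g$ and $g'=f^{(\ell+1)}$ are locally integrable and of exponential order, gives $f^{(\ell)}(0^+)=\lim_{s\to\infty}s\int_0^\infty\mathrm{e}^{-sx}f^{(\ell)}(x)\,\rd x$ along the admissible rays (and along the imaginary axes under the $L^1$ assumption on all derivatives up to order $n$). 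Substituting the displayed relation and multiplying by $s$ yields precisely the asserted formula for $f^{(\ell)}(0^+)$. The step I expect to be the main obstacle is the Riemann--Lebesgue part, and in particular keeping its two regimes cleanly separated: for rays into the open right half-plane the real part blows up and dominated convergence alone suffices, needing only local integrability and exponential order, whereas on the imaginary axes the behaviour is genuinely oscillatory and one needs both the full $L^1$ hypothesis and the classical Riemann--Lebesgue lemma; one must also take care that the integration-by-parts/Fubini identity, valid where the transforms converge, is correctly extended by continuity to the boundary in the $L^1$ case.
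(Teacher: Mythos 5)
Your proposal is correct and follows essentially the same route as the paper's Appendix~\ref{app:IVTproof} proof: the integration-by-parts identity $s\mathfrak f(s)=f(0^+)+\int_{0^+}^\infty f'(x)\,\mathrm{e}^{-sx}\,\rd x$, dominated convergence along rays $s=R\mathrm{e}^{\mathrm{i}\theta}$ with $|\theta|<\pi/2$, the classical Riemann--Lebesgue lemma on the imaginary axes under the $L^1$ hypothesis, and iteration of the partial-integration formula to reduce the $\ell$th-order statement to the basic case with $f'$ replaced by $f^{(\ell)}$. The only cosmetic differences are that you derive the key identity via local absolute continuity and Fubini rather than direct partial integration from $0^-$, and that you collapse the $0^-$/$0^+$ distinction (which the paper retains, following Lundberg \textit{et al.}, to accommodate a possible jump at the origin), which is harmless under the stated classical hypotheses.
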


\section{Coarsening, depinning, multiple merger and other examples}\label{sec:examples}
We now consider a sequence of five examples, each of which does not fit exactly into the format
of the general Smoluchowski-type equation in Definition~\ref{def:gSmoluchowski}. However, in
each case, we can utilise the Grassmannian flow ideas developed above to either: explicitly solve them as
Examples~\ref{ex:coarseningmodel} and \ref{ex:DerridaRetaux}; develop a useful numerical approach as in Example~\ref{ex:Strangsplitting};
generalise our approach to coagulation systems with multiple mergers as in Example~\ref{ex:multiplemergers} or
make a connection to the viscous Burgers equation as in Example~\ref{ex:preLaplaceBurgers}.
\begin{example}[Coarsening]\label{ex:coarseningmodel}
Gallay \& Mielke~\cite{GM} analysed the following coarsening model for $g=g(x;t)$ for $x>t$ and with $g(x;t)=0$ for $x<t$:
\begin{equation*}
\pa_t g(x;t)=g(t;t)\int_0^{x-t}g(y;t)g(x-y-t;t)\,\rd y.
\end{equation*}
The initial data is prescribed at time $t=1$ so that $g(x;1)=g_1(x)$ for a suitable given function $g_1=g_1(x)$.
See Pego~\cite[p.~16--7]{Pego} for more details. Gallay and Mielke used a global linearisation transform to solve this model.
In our context, it is a special case of a multiplicative Grassmannian flow.
Taking the Laplace transform of the coarsening equation reveals that $\gf(s;t)=\int_t^\infty\mathrm{e}^{-sx} g(x;t)\,\rd x$
satisfies the Riccati equation, $\pa_t\gf=g(t,t)\,\mathrm{e}^{-st}\bigl(\gf^2-1\bigr)$,
with the initial condition $\gf(s;1)=\gf_1(s)$, where $\gf_1$ is the Laplace transform of $g_1$.
Now consider the linear prescription: 
\begin{equation*}
\pa_t\pf=c(s,t)\,\qf,\quad \pa_t\qf=b(s,t)\,\pf,\quad \pf=\gf\,\qf,
\end{equation*}
where $c$ and $b$ are given coefficient functions.
Note here, compared to the notation we used for the general Smoluchowski-type equation above,
we replace $1+\qf(s;t)$ by $\qf=\qf(s;t)$. We thus augment this prescription with
the initial conditions $\qf(s;1)=1$ and $\pf(s;1)=\gf_1(s)$.
A straightforward computation reveals $\gf=\gf(s;t)$ given in this prescription satisfies, $\pa_t\gf=c(s,t)-\gf\, b(s,t)\gf$.
This matches the Riccati equation for the Laplace transform of the coarsening
model above once we identify $c(s,t)=b(s,t)=-g(t;t)\,\mathrm{e}^{-st}$. For convenience we set,
$G(s;t)\coloneqq-\int_1^tg(\tau;\tau)\mathrm{e}^{-s\tau}\,\rd\tau$.
Note we have $G(s;1)=0$. Now we observe the two linear evolutionary equations
for $\pf$ and $\qf$ above, with the identifications for $b$ and $c$ indicated just above, satisfy, 
\begin{align*}
&&\pa_t\begin{pmatrix}\qf\\ \pf\end{pmatrix}
&=-g(t;t)\mathrm{e}^{-st}\begin{pmatrix}0&1\\1&0\end{pmatrix}\begin{pmatrix}\qf\\ \pf\end{pmatrix}\\
\Leftrightarrow&&\begin{pmatrix}\qf(s;t)\\ \pf(s;t)\end{pmatrix}
&=\exp\biggl(-G(s;t)\begin{pmatrix}0&1\\1&0\end{pmatrix}\biggr)\begin{pmatrix} 1 \\ \gf_1(s)\end{pmatrix},
\end{align*}
using the initial data for $\pf$ and $\qf$. 
Direct computation implies $\qf(s;t)=\cosh G(s;t)+\gf_1(s)\sinh G(s;t)$ and $\pf(s;t)=\sinh G(s;t)+\gf_1(s)\cosh G(s;t)$. 
Since $\gf=\pf/\qf$ we deduce, 
\begin{equation*}
\gf(s;t)=\frac{\tanh G(s;t)+\gf_1(s)}{1+\gf_1(s)\tanh G(s;t)}.
\end{equation*}
Further, since $s>0$: $\gf(s;t)\coloneqq\int_t^\infty\mathrm{e}^{-sx} g(x;t)\,\rd x\leqslant\mathrm{e}^{-st}\int_t^\infty g(x;t)\,\rd x=\mathrm{e}^{-st}M(t)$, 
where $M=M(t)$ is the total number of clusters at time $t$---as in Pego~\cite[p.~18]{Pego} we assume this is normalised to be one.
Since the right-hand side in this last inequality decays to zero as $t\to\infty$, we deduce, that
$\gf_1(s)=-\tanh G(s;\infty)=\tanh\int_1^\infty g(\tau;\tau)\mathrm{e}^{-st}\,\rd\tau$.
Hence by a standard identity for hyperbolic functions, $\gf(s;t)=\tanh\bigl(G(s;t)-G(s;\infty)\bigr)$, and thus,
\begin{equation*}
  \gf(s;t)=\tanh\int_t^\infty g(\tau;\tau)\,\rd\tau.
\end{equation*}
This matches the solution in Pego~\cite[eq.~(32)]{Pego}.
\end{example}
\begin{example}[Derrida--Retaux depinning]\label{ex:DerridaRetaux}
Derrida and Retaux~\cite{DerridaRetaux} consider the following depinning transition model for the field $r=r(x;t)$:
\begin{equation*}
\pa_t r=a_0\,\pa_xr+B_0\,r\ast r,
\end{equation*}
together with the condition $r(x;0)=r_0(x)$, for a given function $r_0=r_0(x)$.
This does not match the the general Smoluchowski-type equation in Definition~\ref{def:gSmoluchowski},
as $a_0$, which corresponds to `$-d_0$', is assumed to be strictly positive and we assume $B_0>0$ (only).
Derrida and Retaux look for underlying solutions to their depinning transition model of the form, $r(x;t)=R(t)\mathrm{e}^{-D(t)x}$,
where the functions $R=R(t)$ and $D=D(t)$ are determined as follows. Substituting this solution ansatz
into the depinning transition model above, we observe that necessarily $R=R(t)$ and $D=D(t)$ must satisfy,
$\dot{R}=-a_0DR$ and $\dot{D}=-B_0R$. 
Derrida and Retaux integrate this pair of ordinary differential equations to obtain explicit solutions,
revealing three different characteristic behaviours depending on whether: (i) $2R(0)B_0<a_0(D(0))^2$ which
corresponds to the unpinned phase; (ii) $2R(0)B_0=a_0(D(0))^2$ which corresponds to the critical case;
or (iii) $2R(0)B_0>a_0(D(0))^2$ which corresponds to the pinned phase. See Derrida and Retaux~\cite{DerridaRetaux}
for more details, in particular the critical time of divergence of $R=R(t)$ and $D=D(t)$ in case (iii).
With these in hand, our goal is to use the Grassmannian flow approach to derive closed form expressions
for more general solutions, indeed perturbative solutions. 
Suppose we define the perturbation $g=g(x;t)$ via the relation, $r(x;t)=R(t)\mathrm{e}^{-D(t)x}+g(x;t)$,
where $R=R(t)$ and $D=D(t)$ satisfy the ordinary differential equations above.
If $\varepsilon=\varepsilon(x;t)$ represents the function $\varepsilon(x;t)=\mathrm{e}^{-D(t)x}$,
then substituting this solution ansatz for $r=r(x;t)$ into the 
depinning transition model reveals that the perturbation $g=g(x;t)$ necessarily satisfies,
$\pa_t g=a_0\,\pa_xg+2B_0R\,\varepsilon\ast g+B_0\,g\ast g$. The Laplace transform of this is,
\begin{equation*}
\pa_t\gf=\biggl(a_0s+\frac{2RB_0}{s+D}\biggr)\gf+B_0\gf^2.
\end{equation*}
Consider the linear prescription, 
\begin{equation*}
\pa_t\pf=\biggl(a_0s+\frac{2RB_0}{s+D}\biggr)\pf,\quad \pa_t\qf=-B_0\pf,\quad\pf=\gf\qf.
\end{equation*}
Here, as in the last example, we replace $1+\qf(s;t)$ by $\qf=\qf(s;t)$. We augment this 
with the initial conditions $\qf(s;0)=1$ and $\pf(s;0)=\gf_0(s)$, where $\gf_0=\gf_0(s)$
is the Laplace transform of $g_0(x)=r_0(x)-R(0)\varepsilon(x;0)$. If we differentiate the linear relation $\pf=\gf\qf$
with respect to time using the product rule, we observe that $\gf=\gf(s;t)$ satisfies the Laplace transform
evolution equation, corresponding to the depinning transition model just above.
Let us now solve for $\qf=\qf(s;t)$ and $\pf=\pf(s;t)$. By direct integration we observe that, 
$\pf(s;t)=\exp\bigl(a_0st+2B_0\int_0^tR(\tau)/(s+D(\tau))\,\rd\tau\bigr)\gf_0(s)$,
while $\qf(s;t)=1-B_0\int_0^t\pf(s;\tau)\,\rd\tau$.
Hence the solution $\gf$ has the closed form $\gf(s;t)=\pf(s;t)/\qf(s;t)$,
which exists locally around $t=0$ due to the form of $\qf=\qf(s;t)$. We note, by direct computation
using the explicit forms for $\pf=\pf(s;t)$ and $\qf=\qf(s;t)$ above, for any finite small $t$ we have
$s\gf(s;t)\sim-(a_0/B_0)s^2$ as $s\to\infty$, so that within an infinitessimally small time interval,
$g=g(x;t)$ is divergent at the boundary $x=0$. Inserting the explicit forms for $R=R(t)$ and $D=D(t)$
into the solutions for $\pf=\pf(s;t)$ and $\qf=\qf(s;t)$ above, for each of the cases (i)---(iii),
and pursuing the analysis of the perturbative solution $g=g(x;t)$ to extend the analysis in
Derrida and Retaux~\cite{DerridaRetaux} is very much of interest, though it exceeds our goals here.
\end{example}
\begin{example}[Lambert--Schertzer genealogy]\label{ex:Strangsplitting}
The genealogy model of Lambert and Schertzer~\cite{LS} has the form, $\pa_t g=\pa_x(\alpha g)-D_0 g+\tfrac12 g\ast g$ for $g=g(x,t)$, 
where $\alpha=\alpha(x)$ has the specific form $\alpha=cx^2$, where $c$ is a constant, and $D_0=D_0(t)$ is a given function.
This model does not fit directly into the format of the general Smoluchowski-type equation in Definition~\ref{def:gSmoluchowski}
as the first linear vector field $V_1(g)\coloneqq\pa_x\bigl(\alpha g\bigr)$,
involves the non-constant coefficient term `$\alpha(x)\pa_xg$', and the second vector field $V_2(g)\coloneqq-D_0 g+\tfrac12 g\ast g$
involves the term `$-D_0g$' in which $D_0=D_0(t)$ is not constant.
However the vector field $V_1$ is integrable on its own, as is the second vector field $V_2$ which constitutes a Grassmannian flow.
This suggests we can construct a numerical Strang splitting method $\exp(\frac12\Delta tV_1)\circ\exp(\Delta tV_2)\circ\exp(\frac12\Delta tV_1)$ 
for which the individual flows are exact. Such a numerical method is second order in time
in general, and the spatial error is due to the discrete representation chosen.
Such knowledge, alongside classical schemes for such coagulation equations such as those of
Keck and Bortz~\cite{KB} or Carr \textit{et al.\/} \cite{CAD}, can thus be used to improve complexity.
\end{example}
\begin{example}[Multiple mergers]\label{ex:multiplemergers}
Smoluchowski models with multiple mergers arise naturally. The $n$-step descendent distribution of a Galton--Watson process
satisfies a discrete Smoluchowski flow, and the L\'evy jump measure of certain continuous state branching processes satisfy 
a multiple coalescence version of continuous Smoluchowski flow.
The spatial Laplace exponent $\gf=\gf(s;t)$ associated with the Lamperti transformation of the underlying
continuous state branching process satisfies,
\begin{equation}\label{eq:multiplemergers}
\pa_t\gf+\Psi(\gf)=0,
\end{equation}
with $\gf_0(s)=s$, where $\Psi$ is the given branching mechanism, see Iyer \textit{et al.\/} \cite{ILP}.
For example, for the Smoluchowski coagulation case with constant kernel $K=1$ we have considered hitherto,
the function $\Psi=\Psi(\gf)$ has the form $\Psi(\gf)=-\frac12\gf^2$; see for example Menon and Pego~\cite{MP}.
The general evolution equation~\eqref{eq:multiplemergers} can be solved by a general Grassmannian flow as follows.
Suppose the scalar functions $\pf=\pf(s;t)$, $\qf=\qf(s;t)$ and $\gf=\gf(s;t)$ satisfy the system of equations: 
\begin{equation*}
\pa_t\pf=0,\quad \pa_t\qf=\pf,\quad\pf=\Phi(\gf) \qf.
\end{equation*}
Here the nonlinear function $\Phi=\Phi(\gf)$ is determined as follows.
If we use the product rule to compute the time derivative of the third equation
we find, after dividing through by $\qf$, that $\gf$ necessarily satisfies $\pa_t\gf+\bigl(\Phi'(\gf)\bigr)^{-1}\Phi^2(\gf)=0$.
To match this equation to~\eqref{eq:multiplemergers} we require $\Phi=\Phi(\gf)$ to satisfy $\Phi'=(\Psi)^{-1}\Phi^2$.
This is a Riccati equation which is a Grassmannian flow. Indeed suppose the scalar fields
$P=P(\gf)$, $Q=Q(\gf)$ and $\Gamma=\Gamma(\gf)$ satisfy the linear system,
\begin{equation*}
\pa_{\gf} P=0,\quad \pa_{\gf} Q=-(\Psi)^{-1}P,\quad P=\Gamma Q.
\end{equation*}
Taking the derivative of the third equation with respect to $\gf$ and using
the product rule we observe that $\Gamma=\Gamma(\gf)$ satisfies the Riccati equation
$\pa_{\gf}\Gamma=(\Psi)^{-1}\Gamma^2$ matching the Riccati equation $\Phi'=(\Psi)^{-1}\Phi^2$
for $\Phi=\Phi(\gf)$. Hence we deduce that $\Phi=\Phi(\gf)$ is given by $\Phi=PQ^{-1}$,
where $P$ and $Q$ satisfy the linear equations shown. 
Since $P(\gf)=P_0$, a constant, we deduce $\Phi=P_0Q^{-1}$. Integrating the second equation we find,
\begin{align*}
&Q=Q_0-\int_{\gf_0}^{\gf}\bigl(\Psi(\tilde\gf)\bigr)^{-1}\,\rd\tilde\gf\,P_0\\
\Rightarrow~&\frac{1}{\Phi\bigl(\gf(s,t)\bigr)}-\frac{1}{\Phi\bigl(\gf_0(s)\bigr)}=-\int_{\gf_0(s)}^{\gf(s,t)}\frac{1}{\Psi(\tilde\gf)}\,\rd\tilde\gf.
\end{align*}
This suggests we take the defining relation for $\Phi$ to be $\bigl(\Phi(\gf)\bigr)^{-1}=-\int^{\gf}\bigl(\Psi(\tilde\gf)\bigr)^{-1}\,\rd\tilde\gf$,
i.e.\/ in terms of the indefinite integral shown, taking the additive constant to be zero.
Note that in principle we could have used this form for $\Phi=\Phi(\gf)$ in the relation between 
$\pf$ and $\qf$ from the outset. In any case the relation above demonstrates how we can recover the unknown
$\Phi=\Phi(\gf)$ from the known function $\Psi=\Psi(\gf)$.
Then the linear equations for $\pf=\pf(s;t)$ and $\qf=\qf(s;t)$ reveal $\pf(s,t)=\pf_0(s)$ and $\qf(s,t)=\qf_0(s)+\pf_0(s)t$
where $\pf_0=\pf_0(s)$ and $\qf_0=\qf_0(s)$ are functions of `$s$' only. We find,
$\Phi\bigl(\gf(s,t)\bigr)=\pf(s,t)/\qf(s,t)=\bigl(\bigl(\Phi(\gf_0(s))\bigr)^{-1}+t\bigr)^{-1}$.
Hence, provided we can compute the inverse function $\Phi^{-1}$, then $\gf=\gf(s,t)$ is given by, 
\begin{equation*}
\gf(s,t)=\Phi^{-1}\Biggl(\biggl(\Bigl(\Phi\bigl(\gf_0(s)\bigr)\Bigr)^{-1}+t\biggr)^{-1}\Biggr).
\end{equation*}
For example, consider the Smoluchowski case in Example~\ref{example:explicitsolution} with $\Psi(\gf)=-\frac12\gf^2$,
i.e.\/ assume we have subsumed the coagulation loss term `$-M\gf$' via the de-singularised Laplace transform.
Then the formula for $\Phi=\Phi(\gf)$ involving the indefinite integral above reveals
$\Phi(\gf)=-\frac12\gf$. The formula for $\gf=\gf(s,t)$ above reveals, $\gf(s,t)=2\gf_0(s)/(2-t\gf_0(s))$, 
in this special case. This matches the explicit solution given in Example~\ref{example:explicitsolution}
(ignoring the initial total number of clusters $M_0$). See Remark~\ref{rmk:Smoluchowskirigorous} and also Menon and Pego~\cite{MP}.
\end{example}
\begin{example}[Pre-Laplace viscous Burgers]\label{ex:preLaplaceBurgers}
For the general Smoluchowski-type equation in Definition~\ref{def:gSmoluchowski},
we imposed the condition that $d=d(\pa_x)$ and $b=b(\pa_x)$ have constant coefficients.
However there is at least one exceptional case as follows. Consider the following  equation for $g=g(x;t)$ given for $x\in[0,\infty)$ by,
$\pa_t g=\nu x^2 g+\tfrac12x\,g\ast g$,
where $\nu>0$ is a constant parameter. Suppose that initially $g(x;0)=g_0(x)$ for a given function $g_0=g_0(x)$.
This corresponds to the case where $D_0=-\nu x^2$ and $B_0=\frac12x$, with all other parameters zero, in the
general Smoluchowski-type equation. The $x$-dependence in $B_0$ and $D_0$ is outwith the conditions on $d$ and $b$ mentioned.
Taking the Laplace transform, we see that $\gf=\gf(s;t)$ satisfies, 
\begin{equation*}
\pa_t\gf=\nu\pa_s^2\gf+\tfrac12\pa_s(\gf^2),
\end{equation*}
with $\gf(s;0)=\gf_0(s)$, where $\gf_0=\gf_0(s)$ the Laplace transform of $g_0=g_0(x)$.
This is the \emph{viscous Burgers equation}. Consider the following prescription
of linear equations for the functions $\pf=\pf(s;t)$, $\qf=\qf(s;t)$ and $\gf=\gf(s;t)$:
\begin{equation*}
\pa_t\pf=\nu\pa_s^2\pf,\quad \qf=-\frac{1}{2\nu}\int_s^\infty\pf(\sigma;t)\,\rd\sigma,\quad \pf=\gf\qf.
\end{equation*}
Note here, as in Examples~\ref{ex:coarseningmodel} and \ref{ex:DerridaRetaux}, we replace $1+\qf(s;t)$ by $\qf=\qf(s;t)$.
We observe that with this precription, the function $\qf=\qf(s;t)$ also satisfies the heat equation $\pa_t\qf=\nu\pa_s^2\qf$,
and further, the definition of $\qf$ is equivalent to $\pf=2\nu\pa_s\qf$.
By direct computation, $\gf=\gf(s;t)$ can be shown to satisfy the viscous Burgers equation above provided $\qf(s;t)\neq0$.
We augment the prescription with the initial conditions $\pf(s;0)=\gf_0(s)$ and $\qf(s;0)=-(1/2\nu)\int_s^\infty\gf_0(\sigma)\,\rd\sigma$.
Hence, provided $\gf_0$ is not identically zero, then for a short time at least $\qf=\qf(s;t)$ will also
be non-zero. Indeed, the third linear, algebraic equation for $\gf$ implies $\gf=2\nu\pa_s\log\qf$,
i.e.\/ the \emph{Cole--Hopf transformation}. Hence a unique solution exists globally in time since $\qf=\qf(s;t)$
is a solution to the heat equation---say assuming initial data $g_0=g_0(x)$ for its Laplace transform $\gf_0=\gf_0(s)$
is analytic in $\CC_+$. Consequently, there is a unique global solution to the 
pre-Laplace equation above. This result is the analogue of that in Beck~\textit{et al.\/}~\cite[Remark~8]{BDMS1}.
\end{example}

\section{Additive and multiplicative kernel cases}\label{sec:FredholmGrassmannians}
We now consider Smoluchowski's equation~\eqref{eq:Smoluchowski} in the additive $K=x+y$ and multiplicative $K=xy$ coagulation
frequency kernel cases. These two cases are intimately related, see Deaconu and Tanr\'e~\cite[Section~3.3]{DT}.
Our goal in this section is to show that the additive and multiplicative kernel cases are also linearisable as
as Grassmannian flows, indeed they are rank-one analytic Grassmannian flows. To set the scene, we rely heavily on the
rigorous results established by Menon and Pego~\cite{MP} as follows.
For any positive Radon measure $\nu_t=\nu_t(\rd x)$ on $(0,\infty)$ with scalar real parameter $t$,
the \emph{desingularised} Laplace transform $\gf=\gf(s;t)$ is given by~\eqref{eq:desingLT}.
The \emph{modified-desingularised} Laplace transform $\gf^\ast=\gf^\ast(s;t)$ is defined by, 
\begin{equation}\label{eq:mdesingLT}
\gf^\ast(s;t)\coloneqq\int_0^\infty (1-\mathrm{e}^{-sx})x\,\nu_t(\rd x).
\end{equation}
The former is the natural context for the constant and additive cases, while the latter is 
the natural context for the multiplicative case.
In all cases, we assume the data $\nu_0$ is a positive Radon measure on $(0,\infty)$.
In the additive case, the desingularised Laplace transform $\gf=\gf(s;t)$ of the solution to Smoluchowski's equation~\eqref{eq:Smoluchowski}
satisfies the inviscid Burgers equation with linear decay ($\pa=\pa_s$),
\begin{equation}\label{eq:inviscidBurgersdamping}
\pa_t\gf=\gf\pa\gf-\gf.
\end{equation}
Let $\eta=\eta(s)$ denote the initial data corresponding to the desingularised Laplace transform of $\nu_0$, so $\gf(s;0)=\eta(s)$.  
Thus $\eta$ is analytic in $\CC_+$. Here we also used that in this case
the total mass of clusters $M_t(t)\coloneqq\int_{[0,\infty)}x\,\nu_t(\rd x)$ is constant, and we can normalise the initial data so that $M_1(t)\equiv1$.
Further, by definition, $\gf(0;t)\equiv 0$ and $\pa_s\gf(0;t)\equiv M_1(t)$; in particular $\eta(0)=0$ and $\pa_s\eta(0)=1$.
In the multiplicative case, the modified-desingularised Laplace transform of the solution to Smoluchowski's equation~\eqref{eq:Smoluchowski}
satisfies the inviscid Burgers equation,
\begin{equation}\label{eq:inviscidBurgers}
\pa_t\gf^\ast=\gf^\ast\pa\gf^\ast.
\end{equation}
Let $\eta^\ast=\eta^\ast(s)$ denote the initial data, analytic in $\CC_+$, such that $\gf^\ast(s;0)=\eta^\ast(s)$.
We observe that, by definition, $\gf^\ast(0;t)\equiv 0$, while $\pa_s\gf^\ast(0;t)\equiv M_2(t)$ where $M_2(t)\coloneqq\int_{[0,\infty)}x^2\,\nu_t(\rd x)$.
This means in particular that $\eta^\ast(0)=0$ and we can normalise the initial data so that $\pa_s\eta^\ast(0)=1$.
See Menon and Pego~\cite{MP} for more details. For the constant and additive kernel cases,
they establish there exists a unique positive Radon measure solution $\nu_t$ for all $t\in[0,\infty)$,
such that in the former case $t\mapsto\nu_t$, and in the latter case $t\mapsto x\nu_t$, is weakly continuous.
In the multiplicative kernel case, a solution exists on $[0,1)$ with $t\mapsto x^2\nu_t$ weakly continuous.
We henceforth focus on the classical problem of determining analytical solutions
to the nonlinear equations~\eqref{eq:inviscidBurgersdamping} and \eqref{eq:inviscidBurgers},
with data satisfying the properties mentioned, and demonstrating that they are linearisable as rank-one analytic Grassmannian flows.

Our approach is inspired by Byrnes~\cite{Byrnes} and Byrnes and Jhemi~\cite{BJ} and their
approach to an optimal nonlinear control problem and Riccati partial differential equation for the current state feedback.
We adapt their approach, which applies in a multi-dimensional context, to our scalar and one-dimensional application.
For convenience we adopt the notation $\mathfrak f_t\circ a$ to denote functions $\mathfrak f=\mathfrak f(a;t)$ and often
assume the time-dependence is implicit and write $\mathfrak f\circ a$. Consider the following prescription.
\begin{prescription}[Invisc. Burgers]\label{prescription:inviscidBurgers}
For $a\in\CC_+$, suppose $\qf=\qf_t$, $\pf=\pf_t$ and $\gf=\gf_t$
satisfy $\qf_0\circ a=a$ and $\pf_0\circ a=\eta\circ a$ and the system of equations,
\begin{equation*}
\pa_t\pf=0,\qquad 
\pa_t\qf=-\pf,\qquad
\pf=\gf\circ\qf.
\end{equation*}
We assume the initial data $\eta=\eta\circ a$ is analytic on $\CC_+$ with $\eta\circ0=0$.
For the additive case, we replace the first equation by $\pa_t\pf=-\pf$.
\end{prescription}
For the moment, assume all the relevant fields are analytic in $a\in\CC_+$ and smooth in time---we discuss the validity of this presently.
Then it is straightforward to show that $\gf=\gf_t\circ\qf$ satisfies the inviscid Burgers equation~\eqref{eq:inviscidBurgers},
and with the adaptation for the additive case, $\gf=\gf_t\circ\qf$ would satisfy the inviscid Burgers equation~\eqref{eq:inviscidBurgersdamping}.
Equation~\eqref{eq:inviscidBurgersdamping} or \eqref{eq:inviscidBurgers}, can be solved by characteristics
and in either case, the linear equations for $\pf$ and $\qf$ prescribe the characteristic coordinates $\qf=\qf_t\circ a$,
with the characteristics labelled by the coordinate $a\in\CC_+$ at time $t=0$.
For the adapted equations in Prescription~\ref{prescription:inviscidBurgers} with $\pa_t\pf=-\pf$ $\Leftrightarrow$ $\pf_t=\mathrm{e}^{-t}\eta$,
which corresponds to the additive kernel case, the characteristic coordinates are given by $\qf_t\circ a=a-(1-\mathrm{e}^{-t})\,\eta\circ a$.
The solution $\gf=\gf_t\circ\qf$ thus has the form,
\begin{equation}\label{eq:Burgersdampedsolution}
\gf_t\circ\qf=\mathrm{e}^{-t}\eta\circ\bigl(\id-(1-\mathrm{e}^{-t})\eta\bigr)^{-1}\circ q.
\end{equation}
For the equations given in Prescription~\ref{prescription:inviscidBurgers}, corresponding to the multiplicative case, $\pf_t=\eta$
and the characteristic coordinates are $\qf_t\circ a=a-t\,\eta\circ a$. The solution $\gf=\gf_t\circ\qf$ has the form,
\begin{equation}\label{eq:Burgerssolution}
\gf_t\circ\qf=\eta\circ\bigl(\id-t\eta)^{-1}\circ q.
\end{equation}
\begin{remark}\label{rmk:nocoincidence}
  It is no coincidence that the solution $\gf=\gf_t$ to the Riccati equation $\pa_t\gf=\gf^2$ can be generated
  from Prescription~\ref{prescription:inviscidBurgers} as follows---recall Prescription~\ref{prescription:constantkernel}
  from Section~\ref{sec:Smoluchowski}. If we replace the function composition `$\circ$' everywhere in
  Prescription~\ref{prescription:constantkernel} by scalar multiplication so that, for example, $\pf_0\circ a$  
  becomes $\pf_0\,a$, then the initial data becomes $\qf=1$ and $\pf_0=\eta$. The linear evolution equations for
  $\pf$ and $\gf$ generate $\pf=\eta$ and $\qf=1-t\eta$. The third relation reduces to $\pf=\gf\,\qf$, from
  which we deduce $\gf=\gf_t$ satisfies $\pa_t\gf=\gf^2$ which has the solution,
  \begin{equation*}
  \gf_t=\eta\,(1-t\eta)^{-1}.
  \end{equation*}
  This naturally suggests, the generalisation to the compositional dependence $\pf=\gf\circ\qf$ in Prescription~\ref{prescription:inviscidBurgers},
  generates the consequential generalisation from the Riccati to the inviscid Burgers equation; as Byrnes~\cite{Byrnes} and Byrnes and Jhemi~\cite{BJ} concluded.
\end{remark}
We can push Remark~\ref{rmk:nocoincidence} further.
Formally, we can construct a \emph{nonlinear graph manifold} as follows. Given fields $\qf$ and $\pf$ the
relation $\pf=\gf\circ\qf$ prescribes a nonlinear graph $\gf$, which represents a coordinate patch on such a manifold.
Indeed we have,
\begin{equation}\label{eq:gnonlinear}
\begin{pmatrix} \qf \\ \pf \end{pmatrix}
=\begin{pmatrix} \qf \\ \gf\circ\qf \end{pmatrix}
=\begin{pmatrix} \id \\ \gf\end{pmatrix}\circ\qf. 
\end{equation}
Equivalencing by all compositions with $\qf$, the form $(\id,\gf)$ represents the graph of the function $\gf$. 
We could also consider the map $\gf^\prime\colon\pf\mapsto\qf$ so that,
\begin{equation}\label{eq:gpnonlinear}
\begin{pmatrix} \qf \\ \pf \end{pmatrix}
=\begin{pmatrix} \gf^\prime\circ\pf \\ \pf \end{pmatrix}
=\begin{pmatrix} \gf^\prime \\ \id\end{pmatrix}\circ\pf. 
\end{equation}
Equivalencing by all compositions with $\pf$, generates the form $(\gf^\prime,\id)$, respresenting another coordinate patch
of the nonlinear graph manifold. Naturally, we have $\gf^\prime=\gf^{-1}$. In this case if $\pf$ and $\qf$ satisfy the
linear evolution equations in Prescription~\ref{prescription:inviscidBurgers}, then,
\begin{equation*}
\eta=\pf_t=-\pa_t\qf=-(\pa_t\gf_t^\prime)\circ\pf_t=-(\pa_t\gf_t^\prime)\circ\eta.
\end{equation*}
In other words $\gf^\prime=\eta^{-1}-t\,\id$, as we might expect; in particular, compare this with \eqref{eq:Burgerssolution}.

Our goal now is to show that we can pullback this notion of a nonlinear graph manifold
to the rigorous setting of a rank-one analytic Grassmann manifold.
The key to this is the Fa\`a di Bruno formula for the composition of analytic functions and the Lagrange Inversion Theorem.
For the moment, we closely follow Figueroa \textit{et al.\/} \cite{FG-BV}. Consider the set of exponential power series of the form,
\begin{equation*}
\ff\circ a=\sum_{n\geqslant1}\frac{\ff_n}{n!}a^n,
\end{equation*}
with $\ff_1\neq0$. We can represent such series by the vector of their coefficients, $\hat\ff=(\ff_1,\ff_2,\ff_3,\cdots)^{\mathrm{T}}$. 
We can compute the composition $\hf=\ff\circ\qf$ of two such series in terms of their
coefficients $\hat\qf$ and $\hat\ff$. At the exponential power series level we have,
\begin{equation*}
\hf\circ a=\sum_{n\geqslant1}\frac{\ff_n}{n!}\Biggl(\sum_{k\geqslant1}\frac{\qf_k}{k!}a^k\Biggr)^n.
\end{equation*}
Cauchy's product or Fa\`a di Bruno formula reveals,
\begin{equation}\label{eq:FaadiBruno}
\hf_n=\sum_{k=1}^n\Biggl(\sum_{\bs{i}\in\mathcal C(n,k)}\frac{n!\,\qf_{i_1}\cdots\qf_{i_k}}{i_1!\cdots i_k!}\Biggr)\frac{\ff_k}{k!},
\end{equation}
where the sum is over all $\bs{i}=(i_1,\ldots,i_k)\in\mathcal C(n,k)$; the set of all compositions of $n$ of length $k$.  
Since all quantities here are scalars, we could use partitions instead; see Figueroa \textit{et al.\/} \cite{FG-BV}.
Naturally, the composition $\ff\circ\qf$ of any two such exponential power series is analytic in the domain of $\qf$;
again see Figueroa \textit{et al.\/} \cite{FG-BV} (a minor adaptation is required).
Hence an inverse exponential power series $\ff$ such that $\ff\circ\qf=\id$ exists.
The Lagrange Inversion Theorem gives the coefficients $\hat\ff$ of the inverse $\ff$. See Gessel~\cite{Gessel}.
The set of such exponential power series thus forms a group.

We now give a (infinite) matrix representation for such exponential power series. Henceforth we
restrict ourselves to exponential series $\ff$ for which $\ff_1=1$.
Let $\ell_{\mathrm{a}}$ denote the set of sequences of coefficients $\hat\ff^1=[1,\ff_2,\ff_3,\cdots]^{\mathrm{T}}$
associated with exponential power series analytic in $\CC_+$. We regard $\ell_{\mathrm{a}}$ as a chart in the
projective space of sequences: we equivalence all sequences $(\ff_1,\ff_2,\ff_3,\cdots)^{\mathrm{T}}$ generated
from exponential power series to $[1,\ff_2,\ff_3,\cdots]^{\mathrm{T}}$ by simply dividing through by $\ff_1\neq0$.
That the first element is unity distinguishes the chart. 
The natural product $\hat\ff^1\cdot\hat\qf^1$ on $\ell_{\mathrm{a}}$ is that generated
by the Fa\`a di Bruno formula~\eqref{eq:FaadiBruno} with unit $\hat\ef=[1,0,0,0,\ldots]^{\mathrm{T}}$.
There is a natural group action of $\ell_{\mathrm{a}}$ on itself, however, there is also a group action on $\ell_{\mathrm{a}}$,
given by (infinite) matricies $Q=Q(\hat\qf^1)$ of lower triangular form with $1$'s down the diagonal, i.e.\/ of the form, 
\begin{equation*}
  Q=\begin{pmatrix}
  1      & 0      & 0      & \cdots \\
  Q_{21} &  1     & 0      & \cdots \\
  Q_{31} & Q_{32}  & 1      & \cdots \\
  \vdots & \vdots & \vdots & \ddots   
    \end{pmatrix},
\end{equation*}
where we assume the entries have the form,
\begin{equation}\label{eq:Qform}
  Q_{nk}\coloneqq\sum_{\bs{i}\in\mathcal C(n,k)}\frac{n!\,\qf_{i_1}\cdots\qf_{i_k}}{k!i_1!\cdots i_k!},
\end{equation}
for any $\hat\qf^1\in\ell_{\mathrm{a}}$. The $Q_{nk}$ have the form of the coefficients of $\ff_k$
in the Fa\`a di Bruno formula for $\hf=\ff\circ\qf$ in~\eqref{eq:FaadiBruno}.
We know from our results above that such matrices form a group, which we denote by $\mathrm{SL}(\ell_{\mathrm{a}})$,
and that $\mathrm{SL}(\ell_{\mathrm{a}})$ generates a natural (pre-multiplicative) left group action on $\ell_{\mathrm{a}}$.
For example, we know that there exists a $Q=Q(\hat\qf^1)$ such that $\hat\hf^1=Q\,\hat\ff^1$ for any pair $\hat\ff^1,\hat\hf^1\in\ell_{\mathrm{a}}$.
Further, for any $Q\in\mathrm{SL}(\ell_{\mathrm{a}})$, $\det Q=1$.
Naturally, the coefficients of the inverse in the Lagrange Inversion Theorem generates the entries in $Q^{-1}$.
Lastly, note that if $Q=Q(\hat\qf^1)$ is generated from $\hat\qf^1$ with entries as in~\eqref{eq:Qform}, then, $Q^{-1}\,\hat\qf^1=\hat\ef$. 

To construct our Grassmannian we need a vector space of sequences. Let $\ell_0$ denote the vector space of sequences of coefficients
$\hat\ff^0=[0,\ff_2,\ff_3,\cdots]^{\mathrm{T}}$ associated with exponential power series analytic in $\CC_+$;
i.e.\/ sequences for which $\ff_1=0$. Such a set of sequences is closed under addition, which here is element-wise addition through the sequences,
and the zero element is $\of=[0,0,0,\ldots]$ as $\hat\ff^0+\of=\hat\ff^0=\of+\hat\ff^0$. 
Using $\ell_0$, we can construct a vector space $\mathbb V\coloneqq\ell_{\mathrm{a}}\oplus\ell_0$ as follows.
We use the convention that when we add projective elements from $\ell_{\mathrm{a}}$, say $\hat\ff^1+\hat\gf^1$, we always
equivalence so the resulting sequence has first element equal to `$1$', and this lies in $\ell_{\mathrm{a}}$.
We observe that $\hat\ff^1+\hat\gf^0\in\ell_{\mathrm{a}}$ for any $\hat\ff^1\in\ell_{\mathrm{a}}$ and $\hat\gf^0\in\ell_0$.
All the vector space axioms for $\mathbb V$ are easily checked, noting that scalar multiplication rules
are subject to the convention mentioned. Note that $\mathrm{SL}(\ell_{\mathrm{a}})$ acts on $\ell_0$, though $Q\,\of=\of$
for all $Q=Q(\hat\qf^1)$. We now construct the rank-one analytic Grassmannian $\mathrm{Gr}(\Vb^{\times2};\Vb)$.
Suppose $\hat\qf^\ast,\hat\pf^\ast\in\Vb$, where $\hat\qf^\ast$ could be $\hat\qf^1$ or $\hat\qf^0$,
the same sequence but starting respectively with `$1$' or `$0$'.
Likewise for $\hat\pf^\ast$. Let,
\begin{equation*}
\lf\coloneqq\begin{pmatrix} \hat\qf^\ast \\ \hat\pf^\ast \end{pmatrix}\qquad\text{and}\qquad
\lf_0\coloneqq\begin{pmatrix} \hat\ef \\ \of \end{pmatrix},
\end{equation*}
respectively, denote a generic line in the two-dimen-sional sequence space $\Vb^{\times2}$,
and the fixed line parallel to the first component axis in $\Vb^{\times2}$. Naturally
we have $\lf_0^\perp=(\of,\hat\ef)^{\mathrm{T}}$, and the projections of $\lf$ onto $\lf_0$ and $\lf_0^\perp$ are, respectively,
\begin{equation*}
\lf^\parallel=\begin{pmatrix} \hat\qf^\ast \\ \of \end{pmatrix}\qquad\text{and}\qquad
\lf^\perp=\begin{pmatrix} \of \\ \hat\pf^\ast \end{pmatrix}.
\end{equation*}
The first projection is achievable iff $\hat\qf^\ast\neq\hat\qf^0$, while the second is achievable iff $\hat\pf^\ast\neq\hat\pf^0$.
Assuming for the moment, $\hat\qf^\ast=\hat\qf^1$, then we observe the line $\lf^\parallel$ coincides with the line $\lf_0$.
If $\hat\pf^\ast=\hat\pf^1$, then the line $\lf^\perp$ coincides with $\lf_0^\perp$.
Indeed the transformations $Q^{-1}=Q^{-1}(\hat\qf^1)$ and $Q^{-1}=Q^{-1}(\hat\pf^1)$ in $\mathrm{SL}(\ell_{\mathrm{a}})$
respectively, transform $\hat\qf^1$ to $\hat\ef$, and $\hat\pf^1$ to $\hat\ef$.
Under these transformations, $\lf$ respectively becomes,
\begin{equation*}
\begin{pmatrix} \hat\ef \\ \hat\gf \end{pmatrix}\qquad\text{or}\qquad \begin{pmatrix} \hat\gf^\prime \\ \hat\ef \end{pmatrix},
\end{equation*}
where $\hat\gf=Q^{-1}(\hat\qf^1)\,\hat\pf^\ast$ and $\hat\gf^\prime=Q^{-1}(\hat\pf^1)\,\hat\qf^\ast$. Compare these with
\eqref{eq:gnonlinear} and \eqref{eq:gpnonlinear}. The first form above parameterises all lines that can be projected onto
the line $\lf_0$, while the second parameterises all those that can be projected onto $\lf_0^\perp$. The two sets intersect.
Indeed only $\lf_0^\perp$ cannot be represented by the first form, and only $\lf_0$ cannot be represented by the second form.
The forms represent coordinate charts of the rank-one analytic Grassmannian $\mathrm{Gr}(\Vb^{\times2};\Vb)$ or the Grassmannian
of lines in two-dimensional sequence space. The charts are compatible as, in the intersection, necessarily $\hat\qf^\ast=\hat\qf^1$
and $\hat\pf^\ast=\hat\qf^1$. Thus given the line $(\hat\ef,\hat\ff^1)^{\mathrm{T}}$ in the first chart, the transformation
$Q^{-1}=Q^{-1}(\hat\ff^1)$ generates,
\begin{equation*}
\begin{pmatrix} Q^{-1}(\hat\ff^1)\,\hat\ef\\ Q^{-1}(\hat\ff^1)\,\hat\ff^1\end{pmatrix}\equiv\begin{pmatrix}(\hat\ff^1)^{-1}\\\hat\ef\end{pmatrix},
\end{equation*}
in the second chart, and vice-versa via an analogous transformation. Note that the first column of $Q^{-1}=Q^{-1}(\hat\ff^1)$
corresponds to $(\hat\ff^1)^{-1}$. Our prescription of $\mathrm{Gr}(\Vb^{\times2};\Vb)$ is thus complete.

We now return to the inviscid Burgers equation, Prescription~\ref{prescription:inviscidBurgers} and solutions
\eqref{eq:Burgersdampedsolution} and \eqref{eq:Burgerssolution}---the former corresponding to the damped version of the equation.
We can simplify the problem for ourselves by noting the following aspect of the characteristics solution or equivalently
Prescription~\ref{prescription:inviscidBurgers}. For either of the solutions \eqref{eq:Burgersdampedsolution} or \eqref{eq:Burgerssolution},
the real work is in computing the inverse of $\qf_t=\qf_t\circ a$. The leftmost function compositions, respectively
$\mathrm{e}^{-t}\eta$ and $\eta$, just represent \emph{observation} functions. They attach passive values that are transported by the flow
generated by the characteristics $\qf_t=\qf_t\circ a$. We can apply such observation functions as a second stage.
In other words, we could simplify Prescription~\ref{prescription:inviscidBurgers} to $\pa_t\pf=0$, or $\pa_t\pf=-\pf$ in the additive case,
$\pa_t\qf=-p$, and $\id=\ff_t\circ\qf_t$. So the goal is to compute $\ff_t$, the compositional inverse to $\qf_t$.
Recall we assume the data $\eta=\eta\circ a$ is analytic for $a\in\CC_+$, and that for both 
cases $\eta\circ0=0$ and $\pa\eta\circ0=1$. Hence in both cases $\eta$ has an exponential power series expansion with
coefficients $\hat\eta^1\in\ell_{\mathrm{a}}\subset\Vb$.

First, for the additive case, we know $\pf_t=\mathrm{e}^{-t}\eta$ and thus $\qf_t\circ a=a-(1-\mathrm{e}^{-t})\,\eta\circ a$.
The coefficients $\pf_1=\pf_1(t)$ and $\qf_1=\qf_1(t)$ in the exponential power series expansions of $\pf_t$ and $\qf_t$ are given by $\pf_1=\qf_1=\mathrm{e}^{-t}$,
which is non-zero for all $t\in[0,\infty)$.
Hence $\qf_t$ and $\pf_t$ can be respectively represented by $\hat\pf^1=\hat\pf^1(t)$ and $\hat\qf^1=\hat\qf^1(t)$ in $\ell_{\mathrm{a}}$ for all $t\in[0,\infty)$. 
Second, for the multiplicative case, we know $\pf_t=\eta$ and $\qf_t\circ a=a-t\,\eta\circ a$.
The corresponding exponential power series expansion coefficients $\pf_1=\pf_1(t)$ and $\qf_1=\qf_1(t)$ are given respectively by $\pf_1=1$ and $\qf_1=1-t$.
The latter coefficient is non-zero for $t\in[0,1)$, limiting our analysis for this case to these times. 
Thus $\qf_t$ and $\pf_t$ can be respectively represented by $\hat\pf^1=\hat\pf^1(t)$ and $\hat\qf^1=\hat\qf^1(t)$ in $\ell_{\mathrm{a}}$ for $t\in[0,1)$. 
Then in either case, the relation $\id=\ff_t\circ\qf_t$ becomes the relation $\hat\ef=\hat\ff^1\cdot\hat\qf^1$,
where the product is that generated by the Fa\`a di Bruno formula.
This is equivalent to the relation,
\begin{equation}\label{eq:crucialrelation}
\hat\ef=Q(\hat\qf^1)\,\hat\ff^1.
\end{equation}
We know the solution is $\hat\ff^1=Q^{-1}(\hat\qf^1)\,\hat\ef=(\hat\qf^1)^{-1}$.
Our stated goal is thus complete. The crucial fact at the end of the day, is that we can couch the 
additive and multiplicative kernel cases as Grassmannian flows and they are linearisable in this sense.
Their solution can be achieved by solving the linear equations for the fields $\pf_t$ and $\qf_t$ with
exponential power series representations, and then we can determine the solution by solving the (infinite)
system linear algebraic equations~\eqref{eq:crucialrelation} for $\hat\ff^1$.

\section{Discussion}\label{sec:discussion}
Future directions to pursue include generalising our Grassmannian flow analysis in Section~\ref{sec:FredholmGrassmannians}
to multi-dimensional and non-commutative scenarios and utilising composition operators; see Cowen and MacCluer~\cite{CM}, Kucik~\cite{Kucik}.
Further, Smoluchowski coagulation models are underpinned by stochastic processes, see Aldous~\cite{Aldous}.
For example, the nested Kingman coalescent underlies the genealogy model, see Lambert and Schertzer~\cite{LS}, while
a McKean--Vlasov process underlies the Derrida--Retaux model, see Hu \textit{et al.\/} \cite{HMP}. 
Connecting our analysis to such processes and looking towards more general kernels that arise in applications
is very much of interest, as is developing efficient Monte Carlo methods to simulate such phenomena.





\section{Declarations}

\subsection{Acknowledgement}
We thank Chris Eilbeck and Mohammed Kbiri Alaoui for very useful discussions.

\subsection{Funding and conflicts or competing interests}
SJAM was supported by an EPSRC Mathematical Sciences Small Grant EP/X018784/1. IS was supported by an EPSRC DTA Scholarship. 
There are no conflicts of interests or competing interests. 

\subsection{Data availability statement}
No data was used in this work.

\appendix

\section{Initial Value Theorem proof}\label{app:IVTproof}
For completeness, we prove Theorem~\ref{thm:IVT}. It relies on the generalised Riemann--Lebesgue Lemma.
We follow Lundberg \textit{et al.\/} \cite{LMT:IVT}. 
We first prove the basic case. For any $f\colon[0,\infty)\to\R$ with
$f,f'\in L^1_{\mathrm{loc}}\bigr([0,\infty);\R\bigr)$ and both of exponential order,
or $f,f'\in L^1\bigr([0,\infty);\R\bigr)$, in $\CC_+$:
$\int_{0^-}^\infty \mathrm{e}^{-sx}f'(x)\,\rd x=-f(0^-)+s\int_{0^-}^\infty \mathrm{e}^{-sx}f(x)\,\rd x$.
Hence in $\CC_+$:
$s\mathfrak f(s)=\int_{0^-}^\infty \mathrm{e}^{-sx}f'(x)\,\rd x+f(0^-)=f(0^+)+\int_{0^+}^\infty \mathrm{e}^{-sx}f'(x)\,\rd x$.
We take $s\to\infty$ with $\mathrm{Re}(s)\geqslant0$ on both sides.
Set $s=R\mathrm{e}^{\mathrm{i}\theta}$ with $\theta\in[-\pi/2,\pi/2]$. First suppose
$\theta\in(-\pi/2,\pi/2)$. Then focusing on the limit for the integral on the
right-hand side, we observe,  $\lim_{s\to\infty}\int_{0^+}^\infty \mathrm{e}^{-sx}f'(x)\,\rd x$
is bounded by $\lim_{R\to\infty}\int_{0^+}^\infty \mathrm{e}^{-Rx\cos\theta}\,|f'(x)|\,\rd x$,
which converges to zero by the Dominated Convergence Theorem since $\mathrm{e}^{-Rx\cos\theta}\to 0$ as $R\to\infty$,
pointwise in $x$ for $x>0$ and $\theta\in(-\pi/2,\pi/2)$. Now suppose $\theta=\pm\pi/2$. 
Then, $\lim_{s\to\infty}\int_{0^+}^\infty \mathrm{e}^{-sx}f'(x)\,\rd x=\lim_{R\to\infty}\int_{0^+}^\infty \mathrm{e}^{\pm\mathrm{i}Rx}f'(x)\,\rd x$,
which converges to zero by the Riemann--Lebesgue Lemma since in this case we assume $f'\in L^1\bigr([0,\infty);\R\bigr)$.
This gives the basic result.
Now suppose for all $\ell\in\{0,1,\ldots,n\}$, $f^{(\ell)}\in L^1_{\mathrm{loc}}\bigr([0,\infty);\R\bigr)$ and of exponential order,
or $f^{(\ell)}\in L^1\bigr([0,\infty);\R\bigr)$. In either case, 
for $\mathrm{Re}(s)\geqslant0$ and any $\ell\in\{1,2\ldots,n\}$ we know
$\int_{0^-}^\infty \mathrm{e}^{-sx}f^{(\ell)}(x)\,\rd x=-f^{(\ell-1)}(0^-)+s\int_{0^-}^\infty \mathrm{e}^{-sx}f^{(\ell-1)}(x)\,\rd x$,
i.e., 
\begin{multline*}
s\biggl(s^{\ell-1}\mathfrak f(s)-\sum_{l=2}^{\ell}s^{\ell-l}f^{(l-2)}(0^-)\biggr)\\
=\int_{0^-}^\infty \mathrm{e}^{-sx}f^{(\ell)}(x)\,\rd x+f^{(\ell-1)}(0^-).
\end{multline*}   
Now, from the same juncture, repeating the arguments for the basic case, but with $f'$ replaced by $f^{(\ell)}$,
establishes the more general result.

\end{document}